 \numberwithin{equation}{section}
 \newtheorem{thm}{Theorem}[section]
 \newtheorem{lemma}[thm]{Lemma}
 \newtheorem{cor}[thm]{Corollary}
\theoremstyle{definition}
 \newtheorem{define}[thm]{Definition}
 \newtheorem{rmk}[thm]{Remark}
\newtheorem{question}[thm]{Question}
\begin{document}
\title{Stability of syzygy bundles on varieties of Picard number one}

\author{Chen Jiang}
\address{Shanghai Center for Mathematical Sciences \& School of Mathematical Sciences, Fudan University, Shanghai 200438, China}
\email{chenjiang@fudan.edu.cn}
\author{Peng Ren}
\address{Shanghai Center for Mathematical Sciences,
Fudan University, Shanghai 200438, China}
\email{pren@fudan.edu.cn}


\begin{abstract}
We give a criterion for slope-stability of the syzygy bundle of a globally generated ample line bundle on a smooth projective variety of Picard number $1$ in terms of Hilbert polynomial. As applications, we prove the stability of syzygy bundles on many varieties, such as smooth Fano or Calabi--Yau complete intersections, hyperk\"ahler varieties of Picard number 1, abelian varieties of Picard number $1$, rational homogeneous varieties of Picard number 1, weak Calabi--Yau varieties of Picard number $1$ of dimension $\leq 4$, and Fano varieties of Picard number $1$ of dimension $\leq5$. Also we prove the stability of syzygy bundles on all hyperk\"ahler varieties.
\end{abstract}

\keywords{syzygy bundles; stability; complete intersections; hyperk\"{a}hler varieties; Fano varieties; Calabi--Yau varieties}

\subjclass[2020]{Primary 14H60; Secondary 14C20, 14M10, 14J45, 14J42, 14J32, 14M17}

\date{\today}
\maketitle

\section{Introduction}
Let $X$ be a smooth projective variety over an algebraically closed field $k$ of characteristic $0$ and let $L$ be a globally generated ample line bundle on $X$. The \emph{syzygy bundle} $M_L$ associated with $L$ is the kernel of the evaluation map of global sections of $L$, and we have the following natural exact sequence
\[0\to M_L\to \mathrm{H}^0(X,L)\otimes \mathcal{O}_X\xrightarrow{\textrm{ev} }L\to0.\]

We are interested in the slope-stability of the syzygy bundle $M_L$. Recall that for a torsion-free sheaf $E$ on $X$, its {\it slope} with respect to $L$ is defined by
$$\mu_L(E):=\frac{c_1(E)L^{\dim X-1}}{\textrm{rk}(E)},$$
and $E$ is {\it $\mu_L$-(semi)stable} if
$\mu_L(F)(\leq)< \mu_L(E)$ for any subsheaf $F\subset E$ with $0<\textrm{rk}(F)<\textrm{rk}(E)$.

Stability of syzygy bundles has been studied extensively.
\begin{enumerate}
 \item When $X$ is a smooth projective curve of genus greater than or equal to $1$, many results are known. In particular, Ein and
Lazarsfeld \cite{ein1992stability} showed that $M_L$ is slope-stable if $\mathrm{deg}(L)\geq 2g+1$.

\item When $X$ is a smooth projective surface, Camere \cite{camere2012stability} proved the stability of syzygy bundles on K3 and abelian surfaces, see also the work of Mukherjee and Raychaudhury \cite{Mukherjee2022}; more generally, Ein, Lazarsfeld, and Mustopa \cite{ein2013stability} showed that $M_{L}$ is slope-stable if $L$ is sufficiently ample. See also \cite{basu2023,2405.17006,  Rekuski23,Torres2022} for related works on stability of syzygy bundles on surfaces.

\item
 In general, Ein, Lazarsfeld, and Mustopa \cite{ein2013stability} showed that when $\textrm{Pic}(X)\cong \mathbb{Z}$, $M_{L}$ is slope-stable if $L$ is sufficiently ample, and they conjectured that this is true for any smooth projective variety without assuming $\textrm{Pic}(X)\cong \mathbb{Z}$, and this conjecture was recently proved by Rekuski \cite{rekuski2024stability}.

\item When $X$ is an abelian variety, Caucci and Lahoz \cite{caucci2021stability} showed that $M_{L}$ is slope-semistable if $L=H^{\otimes d}$ where $H$ is an ample line bundle and $d\geq 2$.

\item See also \cite{brenner2008, 2409.04666,flenner1984restrictions, Macias2011, Miro2023, miro2024, trivedi2010semistability} for related works on stability of syzygy bundles on toric varieties.
\end{enumerate}

Inspired by \cite{caucci2021stability,ein2013stability}, we are interested in the stability of $M_L$ for any globally generated ample line bundle $L$ on $X$. We will give a criterion for stability of syzygy bundles on varieties of Picard number $1$, and then apply the criterion to get stability of syzygy bundles on many varieties, such as Fano or Calabi--Yau complete intersections and hyperk\"ahler varieties.

\subsection{A criterion for stability in terms of Hilbert polynomials}
Recall that for a smooth projective variety $X$ and an ample line bundle $H$ on $X$, by the Riemann--Roch formula, the {\it Hilbert polynomial} $P_H(t)$ with respect to $H$ is a polynomial in $t$ of degree $\dim X$ such that for any integer $m\in \mathbb{Z}$, \[P_H(m)=\chi(X,H^{\otimes m}).\]

The following is our main theorem.
\begin{thm}\label{thm: criterion}
 Let $X$ be a smooth projective variety of dimension $\geq 2$ and let $L$ be a globally generated ample line bundle on $X$.

 Assume that
 \begin{enumerate}
 \item $\rho(X)=1$ and take $H$ to be an ample generator of the N\'eron--Severi group of $X$;

 \item $-K_X$ is nef;
 \item
 the Hilbert polynomial $P_H(t)$ satisfies $P_H(1)>0$ and
\begin{align*}
 P_H(t) = \sum_{i=0}^n a_i t^i, \quad \text{with } a_i \geq 0 \text{ for } i \geq 2.
\end{align*}
 \end{enumerate}
Then the syzygy bundle $M_{L}$ is $\mu_H$-stable.
\end{thm}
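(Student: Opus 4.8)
The plan is to prove $\mu_H$-stability by contradiction: suppose some subsheaf $F \subset M_L$ has $0 < \operatorname{rk}(F) < \operatorname{rk}(M_L)$ and $\mu_H(F) \geq \mu_H(M_L)$, and derive a numerical impossibility from the Hilbert-polynomial hypotheses. First I would record the basic invariants from the defining sequence $0 \to M_L \to \mathrm{H}^0(X,L)\otimes\mathcal{O}_X \to L \to 0$. Writing $h^0(L) = P_L(1)$ and $c_1(M_L) = -c_1(L)$, the rank of $M_L$ is $h^0(L) - 1$ and its slope is $\mu_H(M_L) = -c_1(L)\cdot H^{n-1}/(h^0(L)-1)$, which is negative. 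Since $\rho(X) = 1$, every line bundle is a rational multiple of $H$, so write $L \equiv dH$ numerically for some positive rational (in fact positive integer) $d$; this makes all the intersection numbers above into explicit expressions in $d$ and $H^n$, and reduces the stability inequality to a purely numerical comparison.

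Next I would reduce an arbitrary destabilizing subsheaf to a saturated one and then to its determinant, so that it suffices to bound slopes of line bundles (or reflexive rank-one pieces) mapping into $M_L$. The standard mechanism here is to pass to the destabilizing quotient and test stability against subsheaves of small rank; since $\rho(X)=1$, a saturated subsheaf $F$ of rank $r$ has $c_1(F) \equiv c H$ for some integer $c$, and the condition $\mu_H(F) \ge \mu_H(M_L)$ becomes an inequality relating $c$, $r$, $d$, and $H^n$. The crucial geometric input is that a nonzero map from a subsheaf into $M_L$ forces, via the inclusion $M_L \hookrightarrow \mathrm{H}^0(L)\otimes\mathcal{O}_X$, a bound on the global sections of the subsheaf's pieces; I expect to use the nefness of $-K_X$ together with Kodaira-type or Le Potier--type vanishing to control $h^0$ of the relevant twists. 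The hypotheses $P_H(1) > 0$ and $a_i \ge 0$ for $i \ge 2$ are exactly what is needed to guarantee positivity of the Euler characteristics $\chi(H^{\otimes m})$ for the range of $m$ appearing, which in turn (using $-K_X$ nef to kill higher cohomology) gives effective lower bounds on $h^0$.

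The heart of the argument is to convert the slope inequality into a contradiction with these positivity bounds. Concretely, I would consider the quotient $Q = M_L/F$ and dualize or twist appropriately so that a nonvanishing section argument applies: the key is that if $F$ were destabilizing, then a suitable twist $F^\vee \otimes (\text{power of } H)$ or $Q(\text{power of } H)$ would be forced to have more sections than the Hilbert polynomial permits, contradicting $P_H(1) > 0$ and the sign condition on the coefficients $a_i$. The role of the shape of $P_H(t)$ is to ensure that the comparison between $\mu_H(F)$ and $\mu_H(M_L)$, once expressed through $P_H$ evaluated at small integers, always lands on the correct side of the inequality; the conditions $a_i \ge 0$ make $P_H$ convex-like on the relevant integers and prevent cancellation from ruining the estimate.

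The main obstacle, I expect, is the step controlling $h^0$ of an \emph{arbitrary} saturated subsheaf $F$ (or its determinant) rather than of a line bundle: subsheaves need not be locally free, and their sections do not transparently relate to a Hilbert polynomial. The way around this is to reduce to rank-one reflexive sheaves by passing to $\det F$ and $\wedge^r$ considerations, then to invoke $\rho(X)=1$ to identify $\det F$ numerically with a power of $H$, so that the Hilbert polynomial $P_H$ genuinely governs its cohomology up to the vanishing supplied by $-K_X$ nef. Carefully tracking which integers $m$ enter the estimate, and verifying that the sign hypotheses on $a_i$ suffice precisely for those $m$, is where the real care lies; I would organize this as a lemma bounding $h^0(F)$ from above by $P_H$-data and then feed it into the slope comparison to close the contradiction.
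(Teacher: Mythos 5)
Your opening reduction is sound and in fact reconstructs the first ingredient of the paper's argument: passing from an arbitrary saturated destabilizing subsheaf $F$ of rank $r$ to $\det F\equiv H^{\otimes k}$, which yields a nonzero section of $\bigwedge^r M_L\otimes N$ with $N=(\det F)^{-1}$ and $\mu_H(\bigwedge^r M_L\otimes N)\le 0$. This is exactly Coand\u{a}'s criterion (Lemma~\ref{lem slope 1} in the paper). But at the decisive moment your proposal substitutes a hope for a mechanism: you assert that a destabilizing $F$ ``would be forced to have more sections than the Hilbert polynomial permits'' without identifying any result that produces this contradiction. The missing ingredient is Green's Koszul vanishing theorem (Lemma~\ref{lem slope 2}): $\mathrm{H}^0(X,\bigwedge^r M_L\otimes N)=0$ whenever $r\ge \mathrm{h}^0(X,N)$. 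This is a genuinely nontrivial input about syzygy bundles specifically --- it is not a Kodaira- or Le Potier-type vanishing and does not follow from nefness of $-K_X$ --- and without it, or a substitute, no amount of slope bookkeeping closes the argument. Your own worry about bounding $\mathrm{h}^0$ of arbitrary subsheaves is the symptom of this gap: once Green's theorem is available the problem becomes purely numerical (show $r\ge \mathrm{h}^0(X,N)$), and no upper bound on $\mathrm{h}^0(F)$ is ever needed.

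You also misidentify where hypotheses (2) and (3) enter. They are not used to make Euler characteristics positive; their role is to prove the monotonicity statement that $\frac{P_H(k+1)-1}{k+1}\ge \frac{P_H(k)-1}{k}$ for $k\ge 1$ (Lemma~\ref{lem monotone}), which converts the slope inequality $k\le \ell r/(\mathrm{h}^0(L)-1)$ with $L\equiv H^{\otimes\ell}$, $k<\ell$, into $r> P_H(k)-1=\mathrm{h}^0(N)-1$, exactly the threshold Green's theorem requires. Two subtleties in that lemma have no counterpart in your sketch. First, since $a_0,a_1$ carry no sign constraints, the coefficient hypothesis alone does not give monotonicity: the base case $k=1$ is geometric, via Koll\'ar's inequality $\mathrm{h}^0(L)\ge \mathrm{h}^0(H)+\mathrm{h}^0(H_1)-1$ for $L\equiv H^{\otimes 2}$ (with Kodaira vanishing, using $-K_X$ nef, to identify $\mathrm{h}^0$ with $P_H$), and this base case supplies the bound $\sum_{i\ge 2}a_i(2^i-2)\ge a_0-1$ that makes all $k\ge 2$ work. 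Second, strictness of the final inequality is essential (otherwise one only reaches $r\ge \mathrm{h}^0(N)-1$), and the equality case is excluded by Bertini: equality forces a general member of $|L|$ to be reducible, impossible for $L$ globally generated and ample on a variety of dimension $\ge 2$. As written, your proposal would stall both at the vanishing step and at this equality analysis.
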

\begin{rmk}
 In practice, condition (3) in Theorem~\ref{thm: criterion} can be replaced by the following stronger condition: \begin{enumerate}
 \item[(3')] $P_H(t)$ is a polynomial with non-negative coefficients.
 \end{enumerate}
 Indeed, as $H$ is ample, $P_H(t)$ is not identically zero, hence $P_H(1)>0$ which is the sum of all coefficients.
\end{rmk}

\subsection{Applications}
We apply Theorem~\ref{thm: criterion} to many varieties satisfying the criterion.

\begin{define}Let $X$ be a smooth projective variety.
\begin{enumerate}
 \item $X$ is a {\it weak Calabi--Yau} variety if $K_X\equiv 0$. 

 \item $X$ is a {\it hyperk\"ahler} variety if $X$ is simply connected and $\textrm{H}^0(X, \Omega^2_X)$ is spanned by an everywhere non-degenerate $2$-form.

\item $X$ is a {\it Fano} variety if $-K_X$ is ample.
\end{enumerate}

\end{define}

\begin{thm}[=Theorem~\ref{thm ci stable}+Corollary~\ref{cor other varieties}]
 Let $X$ be a smooth projective variety of dimension $\geq 2$ and let $L$ be a globally generated ample line bundle on $X$. Suppose that $X$ is one of the following:
\begin{enumerate}
\item a smooth complete intersection of dimension $\geq 3$ in a projective space such that $-K_X$ is nef;

\item a hyperk\"ahler variety of Picard number $1$;
\item an abelian variety of Picard number $1$;
\item a rational homogeneous variety of Picard number $1$;
\item a weak Calabi--Yau variety of Picard number $1$ of dimension $\leq 4$; or
\item a Fano variety of Picard number $1$ of dimension $\leq5$.
\end{enumerate}
 Then the syzygy bundle $M_{L}$ is $\mu_L$-stable.
\end{thm}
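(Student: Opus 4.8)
The plan is to deduce the theorem directly from the criterion of Theorem~\ref{thm: criterion}. Since $\rho(X)=1$, any globally generated ample $L$ is numerically proportional to the ample generator $H$ by a positive (integer) factor, so $L^{n-1}$ and $H^{n-1}$ are proportional numerical classes and the conditions $\mu_L(F)<\mu_L(E)$ and $\mu_H(F)<\mu_H(E)$ are equivalent; hence $\mu_L$-stability and $\mu_H$-stability coincide, and it suffices to verify hypotheses (1)--(3) of Theorem~\ref{thm: criterion} in each of the six cases. Condition (1), $\rho(X)=1$, is part of the hypothesis in cases (2)--(6), and for a smooth complete intersection of dimension $\geq 3$ in $\mathbb{P}^N$ it follows from the Lefschetz hyperplane theorem, with $H=\mathcal{O}_X(1)$. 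Condition (2), nefness of $-K_X$, is assumed in case (1), holds trivially in cases (2), (3), (5) where $K_X\equiv 0$, and holds in cases (4), (6) because those varieties are Fano. The entire content therefore lies in verifying condition (3) on the Hilbert polynomial $P_H(t)$.

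For the cases in which the Hilbert polynomial is transparent, I would argue as follows. On an abelian variety the tangent bundle is trivial, so $\mathrm{td}(X)=1$ and Hirzebruch--Riemann--Roch gives $P_H(t)=\frac{H^g}{g!}t^g$, a single monomial with positive leading coefficient, so (3) holds at once. On a rational homogeneous variety $G/P$ of Picard number $1$, the Borel--Weil--Bott theorem kills higher cohomology and identifies $P_H(m)=\dim \mathrm{H}^0(X, H^{\otimes m})$ with the value of the Weyl dimension formula; this is a product of linear factors of the shape $(am+b)/b$ with $a,b>0$, indexed by the positive roots not in the Levi, and such a product expands to a polynomial with non-negative coefficients. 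For hyperk\"ahler varieties of Picard number $1$, Riemann--Roch in the form established by Huybrechts expresses $\chi(X, H^{\otimes m})$ as a polynomial in the Beauville--Bogomolov--Fujiki square $q(mH)=m^2 q(H)$ with positive coefficients, which again yields (3).

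The remaining cases require a genuine computation with Hirzebruch--Riemann--Roch. For a complete intersection $X$ of multidegree $(d_1,\dots,d_c)$ in $\mathbb{P}^N$, I would read off $P_H(t)$ from the Hilbert series $\prod_i(1-s^{d_i})/(1-s)^{N+1}$ and control the signs of its coefficients, using the nefness of $-K_X$ (equivalently $\sum_i d_i\leq N+1$). For the weak Calabi--Yau case in dimension $\leq 4$ and the Fano case in dimension $\leq 5$, I would expand $\chi(X,H^{\otimes m})=\int_X \mathrm{ch}(H^{\otimes m})\,\mathrm{td}(X)$ and express each coefficient $a_i$ in terms of the intersection numbers $H^n$, $H^{n-2}\cdot c_2(X)$, and the further Chern monomials that survive in the given dimension. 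The leading coefficient $H^n/n!$ is positive, and establishing $a_i\geq 0$ for $2\leq i\leq n$ reduces to positivity of characteristic numbers such as $\int_X c_2(X)\cdot H^{n-2}\geq 0$.

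The main obstacle is exactly this last positivity, which does not follow formally from Riemann--Roch. The hard part will be establishing the non-negativity of the intermediate Chern-number contributions for weak Calabi--Yau and Fano varieties of the stated dimensions. I expect to handle the $K_X\equiv 0$ case via semistability of the tangent sheaf, which forces $\int_X c_2(X)\cdot H^{n-2}\geq 0$, together with Bogomolov--Miyaoka--Yau-type inequalities; in the Fano case I would supplement this with dimension-specific positivity and, where necessary, the available low-dimensional structure. The dimension bounds $\leq 4$ and $\leq 5$ are presumably what keep the collection of surviving Chern monomials finite and each resulting sign condition checkable.
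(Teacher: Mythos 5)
Your reduction to Theorem~\ref{thm: criterion} and your treatment of the abelian and rational homogeneous cases match the paper, and your semistability argument for $c_2(X)\cdot H^{n-2}\geq 0$ when $K_X\equiv 0$ is a legitimate substitute for the paper's appeal to \cite{Ou23}. But three of your six cases have genuine gaps. First, for complete intersections, ``control the signs of the coefficients'' of $\prod_i(1-s^{d_i})/(1-s)^{N+1}$ under $\sum_i d_i\leq N+1$ is precisely the hard content, not a computation: this is the paper's Theorem~\ref{thm ci poly>0}, proved via Theorem~\ref{thm Fn>=0} by a delicate double induction on $(n,k)$ using the recursion of Lemma~\ref{lemma 1.2} in the subcritical case $\sum d_i\leq n$ and the symmetry/cancellation of Lemma~\ref{lem: n+1 Sj} in the boundary case $\sum d_i=n+1$; the authors note they could not find this statement in the literature, so your plan is missing its central argument. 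Second, for Fano fivefolds your reduction to ``$a_i\geq 0$ for $2\leq i\leq n$'' is incomplete in two ways: (a) positivity of $c_2$ against nef classes cannot be obtained from semistability of $T_X$ in the Fano case (stability of tangent bundles of Fano manifolds of Picard number one is open in general; the paper instead uses \cite[Corollary~1.5]{Ou23}, valid whenever $-K_X$ is nef); and (b) condition (3) of Theorem~\ref{thm: criterion} also demands $P_H(1)>0$, which for $n=5$ does \emph{not} follow from $a_i\geq 0$, $i\geq 2$, because the coefficient $a_1=\mathrm{td}_4(X)H$ is uncontrolled (it involves $c_4$, $c_2^2$, $c_1c_3$ with mixed signs). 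The paper closes this with H\"oring's nonvanishing $\mathrm{h}^0(X,H)>0$ for Fano fivefolds of Picard number one \cite[Corollary~1.3]{Horingfano5}; your proposal never addresses $P_H(1)>0$.

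Third, in the weak Calabi--Yau case you also need the constant term: since $a_1=\frac{1}{24}c_1c_2H=0$, the inequality $P_H(1)=\sum_i a_i>0$ requires $a_0=\chi(X,\mathcal{O}_X)\geq 0$, which is not formal; the paper derives it from the Beauville--Bogomolov decomposition by computing $\chi(\mathcal{O})$ on each factor of a finite \'etale cover. Finally, a misattribution worth correcting: for hyperk\"ahler varieties, Huybrechts--Fujiki Riemann--Roch only gives that $\chi(X,L)$ is a polynomial in the Beauville--Bogomolov--Fujiki square $q(L)$; the non-negativity of that polynomial's coefficients is a separate, recent and nontrivial theorem \cite[Theorem~1.1]{jiang2023positivity}, which is what the paper actually invokes. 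So while your architecture is the paper's, the proposal as written defers exactly the steps that constitute the paper's new content.
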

During the proof of the complete intersection case, we prove the following result, which might be of independent interest.
We could not find such a statement in literature.

\begin{thm}[see Theorem~\ref{thm ci poly>0}]
Let $X$ be a smooth complete intersection in a projective space such that $-K_X$ is nef (namely, $X$ is a Fano or Calabi--Yau complete intersection).
Then the Hilbert polynomial
$P_{\mathcal{O}_X(1)}(t)$ has non-negative coefficients.
\end{thm}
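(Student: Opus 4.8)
The plan is to reduce the statement to a purely combinatorial positivity about the $h$-vector of $X$ and then to isolate the role of the hypothesis that $-K_X$ is nef. Write $X$ as a complete intersection of multidegree $(d_1,\dots,d_c)$ in $\mathbb{P}^N$, so that $n=\dim X=N-c$, and recall that by adjunction $K_X=\mathcal O_X(\sum_i d_i-N-1)$, whence $-K_X$ is nef if and only if $\sum_i d_i\le N+1$, equivalently $D:=\sum_i(d_i-1)\le n+1$. The Koszul resolution yields the Hilbert series $\sum_m\chi(\mathcal O_X(m))\,x^m=h(x)/(1-x)^{n+1}$ with $h(x)=\prod_i(1+x+\cdots+x^{d_i-1})=\sum_{j=0}^D h_jx^j$, a polynomial with non-negative coefficients that is palindromic ($h_j=h_{D-j}$) of degree $D\le n+1$. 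Hence $P_{\mathcal O_X(1)}(t)=\sum_{j=0}^D h_j\binom{n+t-j}{n}$, and the theorem becomes the assertion that this explicit polynomial in $t$ has non-negative coefficients.

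First I would record the exact coefficients. Since $n!\binom{n+t-j}{n}=\prod_{m=1}^n(t+m-j)=g(t+\alpha_j)$, where $g(Y):=\prod_{m=1}^n\bigl(Y+m-\tfrac{n+1}{2}\bigr)$ and $\alpha_j:=\tfrac{n+1}{2}-j$, the coefficient of $t^p$ in $n!\,P_{\mathcal O_X(1)}(t)$ equals $\tfrac1{p!}\sum_{j=0}^D h_j\,g^{(p)}(\alpha_j)$. Three of these are immediate and already show where the hypotheses enter: the leading one ($p=n$) is $h(1)>0$; the constant one ($p=0$) equals $h_0+(-1)^n h_{n+1}\ge 0$, because $g(\alpha_j)=\prod_{m=1}^n(m-j)$ vanishes for $1\le j\le n$ while $h_0=1$ and $h_{n+1}\in\{0,1\}$; and the subleading one ($p=n-1$) equals $n\rho\,h(1)$ with $\rho:=\tfrac{n+1-D}{2}\ge 0$, using that the roots of $g$ sum to $0$ and that palindromy makes $\rho$ the $h$-weighted mean of the $\alpha_j$. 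Thus the nef hypothesis is captured precisely by $\rho\ge 0$ together with the degree bound $D\le n+1$.

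The main content, and the step I expect to be the genuine obstacle, is the positivity of the remaining (middle) coefficients, i.e. $\sum_{j=0}^D h_j\,g^{(p)}(\alpha_j)\ge 0$ for $1\le p\le n-2$. Extracting $g^{(p)}(\alpha_j)=p!\,[z^{n-p}]\prod_{a=1}^{j-1}(1-az)\prod_{b=1}^{n-j}(1+bz)$ (valid for all $0\le j\le n+1$, the middle factor $1+0\cdot z$ dropping out when $1\le j\le n$), this is equivalent to showing that the auxiliary polynomial $\mathcal F(z):=\sum_{j=0}^D h_j\prod_{a=1}^{j-1}(1-az)\prod_{b=1}^{n-j}(1+bz)$ has non-negative coefficients. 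This cannot be done term by term: already $g^{(p)}(\alpha_j)$ takes negative values at the $\alpha_j$ near the centre, so the positivity is genuinely global and must use the full structure of $h$. (Equivalently, via Hirzebruch--Riemann--Roch one may write $P_{\mathcal O_X(1)}(t)=\operatorname{Res}_{H=0}\,e^{tH}h(e^{-H})/(1-e^{-H})^{n+1}$ and reduce to non-negativity of the principal part of $h(e^{-H})/(1-e^{-H})^{n+1}$; the Todd/Bernoulli coefficients are not individually positive, so the numerator $h$ and the bound on its degree are essential.)

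To overcome this I would argue by induction on the number $c$ of equations, building $h$ up one factor $(1+x+\cdots+x^{d-1})$ at a time, with base case $\mathbb{P}^N$ (where $\mathcal F(z)=\prod_{b=1}^n(1+bz)$ has positive coefficients). The crucial structural point is that every intermediate complete intersection again satisfies $\sum d_i\le N+1$ — one is summing fewer positive terms — so the bound $D\le n+1$, and hence $\rho\ge 0$, persists at every stage; the palindromy and unimodality (log-concavity) of $h$ are then available to control the alternating contributions. The delicate part is exactly this inductive step: passing from $Y$ to $X=Y\cap\{f_c=0\}$ replaces $P$ by the difference $P_Y(t)-P_Y(t-d_c)$, and one must show that this difference preserves coefficient non-negativity. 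This is where the degree bound has to be used in an essential way, since the conclusion fails the moment $D=n+2$ (for instance a plane quintic curve, where $P(t)=\tfrac52t^2-\tfrac52t+5$ has a negative middle coefficient), so any argument that does not invoke $D\le n+1$ at this point cannot succeed.
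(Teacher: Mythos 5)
Your reductions are all correct and match the paper's setup in substance: the Koszul resolution gives $P_{\mathcal O_X(1)}(t)=\sum_{j=0}^{D}h_j\binom{t+n-j}{n}$ with $h(x)=\prod_i(1+x+\cdots+x^{d_i-1})$ palindromic of degree $D=\sum_i(d_i-1)$, the nef hypothesis is exactly $D\le n+1$, and your computations of the leading, subleading and constant coefficients, as well as the reformulation as non-negativity of $\mathcal F(z)$, check out. Your sharpness example is also right in substance, though misnamed: $\tfrac52t^2-\tfrac52t+5$ is the Hilbert polynomial of the quintic \emph{surface} in $\mathbb P^3$ (where $D=4=n+2$), not of a plane quintic curve. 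But the proposal stops exactly where the theorem begins. The positivity of the middle coefficients --- which you yourself flag as ``the genuine obstacle'' --- is never proved: your induction on the number $c$ of equations at fixed ambient dimension requires that the difference operator $P(t)\mapsto P(t)-P(t-d_c)$ preserve coefficient non-negativity under the degree constraint, and you offer no mechanism for why the constraint rescues it; invoking ``palindromy and unimodality of $h$'' is a hope, not an argument. As a standalone step it is of precisely the shape of your counterexample ($\binom{t+3}{3}-\binom{t-2}{3}$ has a negative coefficient), so the degree bound must enter through some concrete identity, which your plan does not supply.

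The paper's proof shows what is missing: the working induction lowers the \emph{dimension}, not just the codimension. Writing $F_n(t;d_1,\dots,d_k)=\sum_{I\subset\mathbf k}(-1)^{|I|}\binom{t+n-d_I}{n}$, the identity $n\binom{t+n-d_I}{n}=(t+n-d_I)\binom{t+n-1-d_I}{n-1}$ yields the recursion
\[
nF_n(t;d_1,\dots,d_k)=\Bigl(t+n-\sum_{i=1}^k d_i\Bigr)F_{n-1}(t;d_1,\dots,d_k)+\sum_{i=1}^k d_i\,F_{n-1}(t;d_1,\dots,\hat{d_i},\dots,d_k),
\]
in which the prefactor $t+n-\sum_i d_i$ has non-negative coefficients precisely when $\sum_i d_i\le n$, so induction on $n+k$ closes in the strict Fano range. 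The boundary Calabi--Yau case $\sum_i d_i=n+1$, where the prefactor degenerates to $t-1$ and the recursion fails, needs a second, separate idea: the reflection $I\mapsto\mathbf k\setminus I$ combined with $d_I+d_{\mathbf k\setminus I}=n+1$ shows that each coefficient of $n!F_n$ either vanishes or equals twice the corresponding coefficient after deleting one hypersurface, which reduces the boundary case to the Fano case already handled. Neither the dimension-lowering recursion nor this boundary symmetry (or any substitute for them) appears in your plan, so the induction you describe cannot be completed as stated.
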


Parallel to the work of Caucci and Lahoz \cite{caucci2021stability} on abelian varieties, we prove the stability of syzygy bundles on any
 hyperk\"{a}hler variety by a similar strategy of \cite{caucci2021stability} with the help of a result of Rekuski.

 \begin{thm}\label{thm HK}
 Let $X$ be a hyperk\"{a}hler variety and let $L$ be a globally generated ample line bundle on $X$.
 Then the syzygy bundle $M_{L}$ is $\mu_L$-stable.
\end{thm}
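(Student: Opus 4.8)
The plan is to follow the strategy of Caucci and Lahoz for abelian varieties, with Rekuski's asymptotic stability theorem playing the role of the abelian-variety–specific inputs (isogenies and generic vanishing). First I would record the numerical data. From the defining sequence one has $\mathrm{rk}(M_L)=h^0(X,L)-1$ and $c_1(M_L)=-c_1(L)$, so, writing $\dim X=2n$, $\mu_L(M_L)=-L^{2n}/(h^0(X,L)-1)$. Since $X$ is hyperk\"ahler we have $K_X\cong\mathcal{O}_X$, so Kodaira vanishing gives $h^i(X,L)=0$ for $i>0$ and hence $h^0(X,L)=\chi(X,L)$; moreover the Beauville--Bogomolov--Fujiki form $q$ together with the Fujiki relation let me express $L^{2n}$, and more generally every intersection number $D\cdot L^{2n-1}$ for $D\in\mathrm{NS}(X)$, as an explicit positive multiple of $q(D,L)\,q(L)^{n-1}$. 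When $\rho(X)=1$ these computations reduce the statement to Theorem~\ref{thm: criterion} (note that $-K_X\cong\mathcal{O}_X$ is nef, and Riemann--Roch on a hyperk\"ahler variety makes $P_H$ have the required non-negative coefficients), so the essential new content is the case $\rho(X)\geq 2$.

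The core of the argument is to rule out a saturated destabilizing subsheaf $F\subsetneq M_L$ with $0<\mathrm{rk}(F)<\mathrm{rk}(M_L)$ and $\mu_L(F)\geq\mu_L(M_L)$. The Fujiki relation linearizes the slope inequality: writing $c_1(F)\in\mathrm{NS}(X)$, the quantity $c_1(F)\cdot L^{2n-1}$ is a fixed positive multiple of $q(c_1(F),L)$, so the destabilizing condition becomes a linear inequality in $q(c_1(F),L)$ whose coefficients are governed only by $\mathrm{rk}(F)$ and the discrete invariants $q(L)$ and $h^0(X,L)$. This is precisely the mechanism by which the Picard-number-one inequality of Theorem~\ref{thm: criterion} should persist in higher Picard number: the pairing against $L^{2n-1}$ only ever detects the single linear form $q(-,L)$, so no genuinely new divisor classes can destabilize.

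To supply the positivity needed to close the inequality I would invoke Rekuski's theorem, which gives that $M_{L^{\otimes m}}$ is $\mu_L$-stable for $m\gg 0$. Mirroring Caucci--Lahoz, I would then relate $M_{L^{\otimes m}}$ to $M_L$ through the Butler-type exact sequences attached to the multiplication maps $H^0(X,L)\otimes H^0(X,L^{\otimes(m-1)})\to H^0(X,L^{\otimes m})$, which are surjective for $m\gg 0$ because $K_X\cong\mathcal{O}_X$ and $L$ is ample and globally generated. These sequences, combined with the control of cohomology afforded by $K_X\cong\mathcal{O}_X$, are meant to transfer the stability of $M_{L^{\otimes m}}$ back down to $M_L$, with the BBF-linearized slope of the preceding step keeping track of all the intersection numbers along the way.

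The main obstacle is exactly this transfer. Rekuski's theorem only yields stability in the asymptotically positive range, and on a hyperk\"ahler variety a fixed polarization cannot be pushed into that range within its deformation type, since $q(L)$ is a deformation invariant; so one cannot simply deform $(X,L)$ to reach Rekuski's hypotheses. The real work is therefore to make the descent from $M_{L^{\otimes m}}$ to $M_L$ effective --- controlling how the Butler-type filtration interacts with the slope when $\mathrm{NS}(X)$ has rank $\geq 2$, which is where the linearization of slopes by the Fujiki relation is indispensable. I expect the surjectivity of the multiplication maps and the compatibility of the induced filtration with the BBF-linearized slope to be the delicate points, paralleling the role of isogenies and the Poincar\'e bundle in the abelian-variety case.
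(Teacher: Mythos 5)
Your proposal has a genuine gap, and in fact you flag it yourself in the last paragraph: the descent from $M_{L^{\otimes m}}$ to $M_L$ is never carried out, and it cannot be carried out the way you suggest. Butler-type sequences built from the multiplication maps $\mathrm{H}^0(X,L)\otimes \mathrm{H}^0(X,L^{\otimes(m-1)})\to \mathrm{H}^0(X,L^{\otimes m})$ express $M_{L^{\otimes m}}$ as an extension involving $M_L$ (twisted), so they transfer stability \emph{upward}, from $M_L$ to high powers --- the opposite of what you need; knowing $M_{L^{\otimes m}}$ is stable for $m\gg 0$ (Rekuski's asymptotic theorem) says nothing about a fixed subsheaf of $M_L$. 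In Caucci--Lahoz the analogous step is not a filtration argument at all: it uses isogenies (pullback of $L$ along multiplication-by-$d$ maps) together with the Poincar\'e bundle, structures with no hyperk\"ahler counterpart. Your Fujiki-linearization step also does less than claimed: the identity $D\cdot L^{2n-1}=c\,q(D,L)\,q(L)^{n-1}$ shows the slope only sees $q(c_1(F),L)$, but the Coand\u{a}--Green mechanism (Lemmas~\ref{lem slope 1} and \ref{lem slope 2}) requires bounding $\mathrm{h}^0(X,N)$ for \emph{every} line bundle $N$ with $\mu_L(\bigwedge^r M_L\otimes N)\leq 0$; when $\rho(X)\geq 2$ there are infinitely many numerical classes with the same value of $q(\cdot,L)$ and no single one-variable Hilbert polynomial controls their section counts, so ``no genuinely new divisor classes can destabilize'' is an assertion, not an argument.

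The paper's actual route sidesteps all of this and is worth contrasting with your remark that ``one cannot simply deform $(X,L)$'': one does deform, but to change the Picard number, not the positivity of $L$ (the deformation invariance of $q(L)$ is irrelevant). Following Huybrechts, one puts $(X,L)$ in a polarized family $\mathcal{X}\to S$ over a curve in which the locus $S^{\circ}$ of Picard-number-one fibers is dense; there the criterion of Theorem~\ref{thm: criterion} (your own first paragraph) gives stability of $M_{\mathcal{L}_s}$. Properness of the relative moduli space of semistable sheaves $\mathsf{M}_{\mathcal{X}/S}(P)\to S$ then produces a $\mu_L$-semistable limit $\mathcal{F}_0$ on the central fiber, and a diagram chase --- a semicontinuity count giving $\dim\mathrm{Hom}(\mathcal{F}_0,\mathcal{O}_X)\geq \mathrm{h}^0(X,L)$, an injection $\mathcal{F}_0\hookrightarrow \mathrm{H}^0(X,L)\otimes\mathcal{O}_X$, and a double-dual argument identifying the quotient with $L$ --- shows $\mathcal{F}_0^{\vee\vee}\cong M_L$, so $M_L$ is $\mu_L$-semistable. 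Finally, semistability is upgraded to stability by Rekuski's theorem \cite[Theorem~4.2.8]{rekuski2022wall} --- a different result of Rekuski than the asymptotic one you invoke. To repair your write-up you would need to replace both the BBF-linearization step and the descent step with this deformation-and-limit argument, or find a genuinely new way to control $\mathrm{h}^0(X,N)$ for all numerically non-positive twists in Picard rank $\geq 2$.
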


 The organization of the paper is as the following: in Section~\ref{section 2}, we prove our criterion on stability of syzygy bundles on varieties of Picard number $1$; in Section~\ref{section 3}, we apply the criterion to study stability of syzygy bundles on Fano or Calabi--Yau complete intersections; in Section~\ref{section 4}, we apply the criterion to study stability of syzygy bundles on other varieties of Picard number $1$; in Section~\ref{section 5}, we prove the stability of syzygy bundles on
 hyperk\"{a}hler varieties.

\section{A stability criterion for syzygy bundles on varieties of Picard number one}\label{section 2}
In this section, we prove Theorem~\ref{thm: criterion}. The proof of Theorem~\ref{thm: criterion} uses the approach of Coand{\u a} \cite[Theorem 1.1]{coandua2011stability} and Ein--Lazarsfeld--Mustopa
 \cite[Proposition~C]{ein2013stability} with an additional analysis of Hilbert polynomials.

 The following lemma is essential for Theorem~\ref{thm: criterion}.
 \begin{lemma}\label{lem monotone}
 Let $X$ be a smooth projective variety of dimension $\geq 2$, and let $H$ be an ample line bundle on $X$ with $H\otimes \omega_X^{-1}$ ample.
 Suppose that the Hilbert polynomial $P_H(t)$ satisfies condition (3) of Theorem~\ref{thm: criterion}.
 Then \[\frac{P_H(k+1)-1}{k+1}\geq \frac{P_H(k)-1}{k}\] for any integer $k\geq 1$.
 Moreover, the equality holds only if $k=1$ and a general element in $|L|$ is reducible for any line bundle $L\equiv H^{\otimes 2}$.
\end{lemma}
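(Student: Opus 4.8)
The plan is to reduce the claimed inequality to the single case $k=1$, which I would then supply by a Hopf-type multiplication lemma for sections; the reduction itself is driven entirely by the coefficient hypothesis~(3), while the geometry enters only at $k=1$.

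First I would clear denominators and rephrase. Setting $g(k):=k\,P_H(k+1)-(k+1)\,P_H(k)+1$, the asserted inequality $\frac{P_H(k+1)-1}{k+1}\ge \frac{P_H(k)-1}{k}$ is equivalent to $g(k)\ge 0$. Writing $P_H(t)=\sum_{i=0}^n a_it^i$ and using the factorization $k(k+1)^i-(k+1)k^i=k(k+1)\big((k+1)^{i-1}-k^{i-1}\big)$, a direct expansion shows that the $i=1$ monomial contributes $0$, while the $i=0$ monomial together with the explicit $+1$ contributes $1-a_0$, so
\[ g(k)=(1-a_0)+\sum_{i\ge 2} a_i\,\psi_i(k),\qquad \psi_i(k):=k(k+1)\big((k+1)^{i-1}-k^{i-1}\big). \]
Subtracting, $g(k)-g(1)=\sum_{i\ge 2}a_i\big(\psi_i(k)-\psi_i(1)\big)$. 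For each fixed $i\ge 2$ both factors $k(k+1)$ and $(k+1)^{i-1}-k^{i-1}$ are non-negative and non-decreasing in $k\ge 1$, so $\psi_i$ is non-decreasing; since $a_i\ge 0$ by hypothesis~(3), this yields $g(k)\ge g(1)$ for every $k\ge 1$. Thus the whole statement reduces to proving $g(1)\ge 0$ together with an analysis of equality.

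Next I would establish $g(1)\ge 0$ geometrically. Because $H\otimes\omega_X^{-1}$ is ample, for every $k\ge 1$ the line bundle $H^{\otimes k}\otimes\omega_X^{-1}=H^{\otimes(k-1)}\otimes(H\otimes\omega_X^{-1})$ is ample, so Kodaira vanishing gives $H^i(X,H^{\otimes k})=0$ for $i>0$ and hence $P_H(k)=h^0(X,H^{\otimes k})$ for all $k\ge 1$; in particular $g(1)=h^0(H^{\otimes 2})-2h^0(H)+1$. Note this identification is essential, since the coefficient condition alone cannot bound $a_0=\chi(\mathcal O_X)$. Now I apply the multiplication lemma underlying Coand\u a's approach: as $X$ is integral and $H^0(X,H)\ne 0$ (because $P_H(1)>0$), the image of $H^0(X,H)\otimes H^0(X,H)\to H^0(X,H^{\otimes 2})$ has dimension at least $2h^0(H)-1$, whence $h^0(H^{\otimes 2})\ge 2h^0(H)-1$ and $g(1)\ge 0$.

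Finally, the equality assertion. Since $n=\dim X\ge 2$ we have $a_n=H^n/n!>0$, and $\psi_n$ is strictly increasing, so $g(k)-g(1)\ge a_n\big(\psi_n(k)-\psi_n(1)\big)>0$ for $k\ge 2$; hence $g(k)=0$ forces $k=1$. For $k=1$, equality means $h^0(H^{\otimes 2})=2h^0(H)-1$, i.e. the multiplication map is simultaneously surjective and extremal for the Hopf bound. I expect the delicate point to be precisely this equality case: one must invoke the structure theorem accompanying the multiplication lemma, which in the extremal situation forces $|H|$ to be composed with a pencil, so that the general member of $|H^{\otimes 2}|$ (which then coincides with the image system of products) splits off several fibres and is therefore reducible; one must then transfer this reducibility to every $L\equiv H^{\otimes 2}$. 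The polynomial reduction above is routine, and it is correctly quoting and applying the equality case of the multiplication lemma that I anticipate as the main obstacle.
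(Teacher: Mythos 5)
Your polynomial reduction is correct and coincides in substance with the paper's: the paper records $g(1)\geq 0$ as the inequality $\sum_{i\geq 2}a_i(2^i-2)\geq a_0-1$ and then, for $k\geq 2$, uses that $f_i(k)=k(k+1)^i-(k+1)k^i$ is strictly increasing together with $a_n>0$, which is exactly your ``$g(k)\geq g(1)$ with strictness for $k\geq 2$''; the identification $P_H(k)=\mathrm{h}^0(X,H^{\otimes k})$ via Kodaira vanishing and the Hopf-type multiplication bound at $k=1$ are likewise the paper's route.

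The two points you defer, however, are the actual content of the equality clause, and your proposed fixes would not go through as stated. First, the lemma asserts reducibility of the general member of $|L|$ for \emph{every} $L\equiv H^{\otimes 2}$, and your plan to prove it for $L=H^{\otimes 2}$ and then ``transfer'' has no mechanism: when $\mathrm{h}^1(X,\mathcal{O}_X)\neq 0$, numerically equivalent line bundles are genuinely different, and reducibility in one linear system does not formally propagate to another. The paper sidesteps any transfer by running the $k=1$ step directly for an arbitrary fixed $L\equiv H^{\otimes 2}$: it factors $L=H\otimes H_1$ with $H_1:=L\otimes H^{-1}\equiv H$, uses that the Hilbert polynomial depends only on the numerical class (Riemann--Roch) and that ampleness of the $\omega_X^{-1}$-twists is a numerical condition, so Kodaira vanishing gives $\mathrm{h}^0(X,H_1)=P_H(1)$ and $\mathrm{h}^0(X,L)=P_H(2)$, and then applies \cite[Lemma~15.6.2]{kollar1995shafarevich} to this factorization, obtaining the inequality and the equality conclusion for this arbitrary $L$ in one stroke; you should restructure your $k=1$ step this way rather than attempt a transfer. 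Second, for the equality case you reach for a structure theorem forcing $|H|$ to be composed with a pencil; that is both stronger than needed and not what the cited lemma says. The relevant mechanism is elementary: the addition map $|H|\times|H_1|\to|L|$, $(D_1,D_2)\mapsto D_1+D_2$, is finite (an effective divisor admits only finitely many decompositions into two effective summands), so its closed irreducible image has dimension $(\mathrm{h}^0(H)-1)+(\mathrm{h}^0(H_1)-1)$, which under the equality $\mathrm{h}^0(L)=\mathrm{h}^0(H)+\mathrm{h}^0(H_1)-1$ equals $\dim|L|$; hence every member of $|L|$ is of the form $D_1+D_2$ with both summands nonzero effective (as $H$ and $H_1$ are ample), so no member is a prime divisor, and in particular the general member is reducible. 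This is precisely the equality statement of Koll\'ar's lemma that the paper invokes.
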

\begin{proof}
Write $P_H(t) = \sum_{i=0}^n a_i t^i$ with $a_i\geq 0$ for $i\geq 2$. Here $n=\dim X\geq 2$ and $a_n>0$.


 Fix an ample line bundle  $L\equiv H^{\otimes 2}$. Then $H_1:=L\otimes H^{-1}\equiv H$. Note that $L\otimes \omega_X^{-1}$ and $H\otimes \omega_X^{-1}$ are both ample, then
 $P_H(2)=\mathrm{h}^0(X,H^{\otimes2})$ and
$$\mathrm{h}^0(X,H)=P_H(1)=P_{H_1}(1)=\mathrm{h}^0(X,H_1)$$
by the Kodaira vanishing. Here the Hilbert polynomial is independent of the numerical class of $H$ by the Riemann--Roch formula.
So by \cite[Lemma~15.6.2]{kollar1995shafarevich},
\[P_H(2)=\mathrm{h}^0(X,L)\geq \mathrm{h}^0(X,H)+\mathrm{h}^0(X,H_1)-1=2P_H(1)-1,\]
and the equality holds only if a general element in $|L|$ is reducible. This proves the assertion for $k=1$.
Moreover, this implies that
\begin{align}
    \sum_{i=2}^na_i(2^i-2)\geq a_0-1.\label{eq:1}
\end{align}

Then for integer $k\geq 2$,
 \begin{align*}
 {}& k(P_H(k+1)-1)-(k+1)( P_H(k)-1)\\
 ={}&\sum_{i=2}^na_i(k(k+1)^{i}-(k+1)k^{i})-(a_0-1) \\
 > {}& \sum_{i=2}^na_i(2^i-2)-(a_0-1)\geq 0.
 \end{align*}
 Here for the first inequality, we use the fact that
 \[f_i(k):=k(k+1)^{i}-(k+1)k^{i}=k(k+1)\sum_{j=0}^{i-2}\binom{i-1}{j}k^j\]
 is strictly increasing in $k$ and so $f_i(k)> f_i(1)=2^i-2$, while the last one is by \ref{eq:1}.
\end{proof}

Recall the following two key lemmas:

\begin{lemma}[{\cite[Lemma~2.1]{coandua2011stability}}]\label{lem slope 1}
Let $X$ be a smooth projective variety, let $H$ be an ample line bundle on $X$, and let $E$ be a vector bundle on $X$. If for every integer $r$ with $0<r<\mathrm{rk}(E)$ and for every line bundle $N$ on $X$ with $\mu_H(\bigwedge^rE\otimes N)\leq 0$ one has $\mathrm{H}^0(X, \bigwedge^rE\otimes N)=0$, then $E$ is $\mu_{H}$-stable.
\end{lemma}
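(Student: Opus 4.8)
The plan is to argue by contradiction, producing from any destabilizing subsheaf a global section that the hypothesis forbids. Suppose $E$ is \emph{not} $\mu_H$-stable, so there is a subsheaf $F\subset E$ with $r:=\mathrm{rk}(F)$ satisfying $0<r<\mathrm{rk}(E)$ and $\mu_H(F)\geq\mu_H(E)$. First I would replace $F$ by its saturation in $E$: this keeps the rank fixed and only increases $\deg_H F$ (the quotient $\bar F/F$ is a torsion sheaf, so $c_1(\bar F)-c_1(F)$ is effective), hence the saturated subsheaf is still destabilizing. I may therefore assume $E/F$ is torsion-free. Since torsion-free sheaves on the smooth $X$ are locally free in codimension $1$, there is a closed subset $Z\subset X$ with $\mathrm{codim}\,Z\geq 2$ such that $E/F$, and hence also $F=\ker(E\to E/F)$, is locally free on $U:=X\setminus Z$; in other words $F$ is a subbundle of $E$ over $U$.

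The next step is to pass to top exterior powers and build the section. Because $F$ is torsion-free of rank $r$ on the smooth variety $X$, its determinant $\det F:=(\bigwedge^r F)^{**}$ is a line bundle. Over $U$ the subbundle inclusion $F|_U\hookrightarrow E|_U$ induces a subbundle inclusion $(\det F)|_U=\bigwedge^r(F|_U)\hookrightarrow\bigwedge^r(E|_U)$, equivalently a nowhere-vanishing section of $\bigl(\bigwedge^rE\otimes(\det F)^{-1}\bigr)|_U$. Since $\bigwedge^rE\otimes(\det F)^{-1}$ is locally free, hence reflexive, and $\mathrm{codim}\,Z\geq 2$, this section extends across $Z$ by Hartogs. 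Setting $N:=(\det F)^{-1}$, I obtain $\mathrm{H}^0\bigl(X,\bigwedge^rE\otimes N\bigr)\neq 0$.

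It then remains to verify the slope inequality. Using $c_1(\bigwedge^rE)=\binom{\mathrm{rk}(E)-1}{r-1}c_1(E)$ and $\mathrm{rk}(\bigwedge^rE)=\binom{\mathrm{rk}(E)}{r}$, one computes $\mu_H(\bigwedge^rE)=r\,\mu_H(E)$, so that
\[
\mu_H\bigl(\textstyle\bigwedge^rE\otimes N\bigr)=r\,\mu_H(E)-c_1(\det F)H^{\dim X-1}=r\bigl(\mu_H(E)-\mu_H(F)\bigr)\leq 0,
\]
where I used $c_1(\det F)=c_1(F)$, the identity $c_1(F)H^{\dim X-1}=r\,\mu_H(F)$, and the destabilizing inequality $\mu_H(F)\geq\mu_H(E)$. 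Thus $N$ is a line bundle with $\mu_H(\bigwedge^rE\otimes N)\leq 0$ yet $\mathrm{H}^0(X,\bigwedge^rE\otimes N)\neq 0$, contradicting the hypothesis. Hence no destabilizing subsheaf exists and $E$ is $\mu_H$-stable.

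The only delicate points are the reduction to a saturated subsheaf and the Hartogs extension of the determinant section across the codimension-$\geq 2$ locus where $F$ may fail to be a subbundle; the slope bookkeeping is routine. I expect the extension step to require the most care, since it is exactly what allows one to trade the rank-one subsheaf $\bigwedge^rF$ (not a line bundle in general) for the genuine line bundle $\det F$ while preserving the existence of a global section.
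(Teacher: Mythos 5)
Your proof is correct and follows essentially the same route as the source: the paper itself gives no proof, importing the lemma from \cite[Lemma~2.1]{coandua2011stability}, whose argument is exactly your contrapositive construction --- saturate a destabilizing subsheaf $F\subset E$ of rank $r$, pass to $\det F=(\bigwedge^r F)^{**}$ to obtain a nonzero section of $\bigwedge^r E\otimes(\det F)^{-1}$ (extended over the codimension-$\geq 2$ locus by reflexivity), and check $\mu_H(\bigwedge^r E\otimes(\det F)^{-1})=r(\mu_H(E)-\mu_H(F))\leq 0$. All the delicate points you flag (saturation, Hartogs extension, $c_1(\det F)=c_1(F)$) are handled correctly.
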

\begin{lemma}[{\cite[Theorem~3.a.1]{green1984koszul}, \cite[Lemma~2.2]{coandua2011stability}}]\label{lem slope 2}
Let $X$ be a smooth projective variety and let $L,N$ be line bundles on $X$. Assume that $L$ is globally generated. Then $\mathrm{H}^0(X,\bigwedge^rM_L\otimes N)=0$ for $r\geq \mathrm{h}^0(X, N)$.
\end{lemma}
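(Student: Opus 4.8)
The plan is to realize $\mathrm{H}^0(X,\bigwedge^r M_L\otimes N)$ inside a space of Koszul cocycles and to bound the tensor rank of any such cocycle. Since $L$ is globally generated, the evaluation map $V\otimes\mathcal{O}_X\to L$ (with $V:=\mathrm{H}^0(X,L)$) is surjective, so $M_L$ is a genuine subbundle of $V\otimes\mathcal{O}_X$ whose fiber at a point $x$ is the hyperplane $W_x:=\ker(\mathrm{ev}_x\colon V\to L_x)$, where $\mathrm{ev}_x\neq 0$ for every $x$. Taking $r$-th exterior powers of the defining sequence yields the two-step filtration sequence
\[0\to\bigwedge^r M_L\to\bigwedge^r V\otimes\mathcal{O}_X\xrightarrow{\ \iota\ }\bigwedge^{r-1}M_L\otimes L\to 0,\]
in which $\iota$ is contraction against $\mathrm{ev}$ and, fiberwise, $\bigwedge^r M_{L,x}=\ker(\iota_{\mathrm{ev}_x})=\bigwedge^r W_x$. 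Tensoring by $N$ and using left-exactness of $\mathrm{H}^0$ then embeds $\mathrm{H}^0(X,\bigwedge^r M_L\otimes N)$ into $\bigwedge^r V\otimes\mathrm{H}^0(X,N)$ as the kernel of the induced contraction.

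Concretely, first I would fix a basis $s_1,\dots,s_m$ of $\mathrm{H}^0(X,N)$ with $m=\mathrm{h}^0(X,N)$, and write a hypothetical nonzero section as $s=\sum_j\omega_j\otimes s_j$ with $\omega_j\in\bigwedge^r V$. The requirement that $s$ be a section of the subbundle $\bigwedge^r M_L\otimes N$ translates into the pointwise (Koszul-cocycle) condition that $\Omega(x):=\sum_j s_j(x)\,\omega_j$ lie in $\bigwedge^r W_x$ for every $x$. Rewriting $s$ in minimal tensor form $s=\sum_{j=1}^{t}\omega_j\otimes s_j$ with the $\omega_j$ linearly independent in $\bigwedge^r V$ and the $s_j$ linearly independent in $\mathrm{H}^0(X,N)$, one obtains $t\le m=\mathrm{h}^0(X,N)$.

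The crux is then the linear-algebra assertion that the membership $\Omega(x)\in\bigwedge^r W_x$ for all $x$, given that the functionals $\mathrm{ev}_x$ span $V^{*}$ (equivalently, that the morphism $X\to\mathbb{P}(V^{*})$, $x\mapsto[\mathrm{ev}_x]$, is nondegenerate), forces $t\ge r+1$. Granting this, the inequalities $r\ge\mathrm{h}^0(X,N)\ge t\ge r+1$ are absurd, so no nonzero section can exist. As a sanity check I would record the base case $r=1$: a rank-one cocycle $\omega\otimes s$ forces $\mathrm{ev}_x(\omega)=0$ wherever $s(x)\neq 0$, hence everywhere by density, i.e.\ the section $\omega\in V=\mathrm{H}^0(X,L)$ vanishes identically, which is impossible.

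The hard part will be exactly this general-position statement in the exterior algebra for $r\ge 2$: one must show that a $t$-dimensional family of classes spanned by $\omega_1,\dots,\omega_t$ cannot remain inside the moving family of subspaces $\bigwedge^r W_x$ unless $t>r$. This is precisely the content of Green's Koszul vanishing theorem, and it is where the nondegeneracy of the family $\{W_x\}$ (rather than any positivity of $L$ or $N$) does all the work. I emphasize that a naive induction on $r$ through the filtration sequence above does \emph{not} suffice, since it loses one unit in the bound (yielding only the vanishing under $r\ge\mathrm{h}^0(L\otimes N)$ rather than $r\ge\mathrm{h}^0(N)$); the sharp tensor-rank estimate is genuinely needed to reach the stated threshold.
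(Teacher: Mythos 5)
Your scaffolding is correct and is the standard one: since $L$ is globally generated, taking exterior powers of the evaluation sequence gives $0\to\bigwedge^r M_L\to\bigwedge^r V\otimes\mathcal{O}_X\to\bigwedge^{r-1}M_L\otimes L\to 0$ with $V=\mathrm{H}^0(X,L)$, whence $\mathrm{H}^0(X,\bigwedge^r M_L\otimes N)$ embeds into $\bigwedge^r V\otimes\mathrm{H}^0(X,N)$ as the kernel of the Koszul differential; a minimal tensor decomposition of a putative nonzero section indeed has length $t\le \mathrm{h}^0(X,N)$, and your remark that the naive induction through this filtration degrades the bound is also accurate (vanishing for the quotient term $\bigwedge^{r-1}M_L\otimes L\otimes N$ gives information in the wrong direction). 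But there is a genuine gap exactly where you flag ``the hard part'': the assertion that the pointwise condition $\Omega(x)\in\bigwedge^r W_x$ for all $x$ forces $t\ge r+1$ is never argued; you dispose of it by declaring it to be ``precisely the content of Green's Koszul vanishing theorem.'' That is circular as a proof: the lemma under discussion \emph{is} Green's vanishing theorem (\cite[Theorem~3.a.1]{green1984koszul}, reproduced as \cite[Lemma~2.2]{coandua2011stability}), which is why the paper states it with citations only. Your inequality chain $r\ge \mathrm{h}^0(X,N)\ge t\ge r+1$ therefore rests entirely on an unproven claim; only the case $t=1$ (your sanity check, which works because contraction $\iota_e$ is linear in $e$, so vanishing against the spanning family $\{\mathrm{ev}_x\}$ kills $\omega$) is actually established.

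Note also that your tensor-rank claim is formally \emph{stronger} than the lemma: the cocycle condition only sees the span $U=\langle s_1,\dots,s_t\rangle\subseteq \mathrm{H}^0(X,N)$, so you are implicitly asserting vanishing under $r\ge\dim U$ rather than $r\ge\mathrm{h}^0(X,N)$. This refinement is plausibly what Green's induction in effect proves, but it requires proof, not citation. The missing argument is an induction on $m=\mathrm{h}^0(X,N)$ (the case $m=0$ being trivial, since the section space embeds in $\bigwedge^rV\otimes\mathrm{H}^0(X,N)=0$): one evaluates at a general point $x$ outside the base locus of $|N|$, chooses a basis of $\mathrm{H}^0(X,N)$ adapted to $x$ so that all but one basis element lies in $\mathrm{H}^0(X,N\otimes I_x)$, which has dimension $m-1$, and uses the hyperplane $W_x=\ker(\mathrm{ev}_x)\subset V$ to peel off one section of $N$ at a time --- this is how the cited arguments of Green and Coand\u{a} run. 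As submitted, your text is a faithful reformulation of the statement as a sharp tensor-rank bound together with a correct reduction of the lemma to that bound, but it does not contain a proof of the bound, i.e.\ of the lemma.
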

Now we can prove our main theorem.
\begin{proof}[Proof of Theorem~\ref{thm: criterion}]
As $\rho(X)=1$, every line bundle on $X$ is numerically equivalent to some $H^{\otimes k}$ for  $k\in \mathbb{Z}$. Suppose that $L\equiv H^{\otimes \ell}$ where $\ell\geq 1$.
Recall that $\textrm{rk}(M_L)=\mathrm{h}^0(X, L)-1$ and $c_1(M_L)=-c_1(L)=-\ell c_1(H)$.

Take $N\equiv H^{\otimes k}$ for some $k\in \mathbb{Z}$ and $0<r< \mathrm{h}^0(X, L)-1$ such that
$\mu_H(\bigwedge^rM_L\otimes N)\leq 0$.
By Lemma~\ref{lem slope 1}, it suffices to show that $\mathrm{H}^0(X,\bigwedge^rM_L\otimes N)=0$. By Lemma~\ref{lem slope 2}, it suffices to show that $r\geq \mathrm{h}^0(X,N)$. We may assume that $k>0$.

 Recall that for a vector bundle $E$ of rank $m$, \[c_1(\bigwedge^rE)=\binom{m-1}{r-1}c_1(E)\text{ and }\mathrm{rk}(\bigwedge^rE)=\binom{m}{r}.\]
Hence \[0\geq \mu_H(\bigwedge^rM_L\otimes N)= \left(k-\frac{\ell r}{\mathrm{h}^0(X,L)-1}\right)c_1(H)^{\dim X},\]
which implies
\begin{align}
    k\leq \frac{\ell r}{\mathrm{h}^0(X,L)-1}. \label{eq k<lr}
\end{align}
As $r< \mathrm{h}^0(X,L)-1$, we have $k<\ell $.
So by Lemma~\ref{lem monotone},
we have
\begin{align}\frac{P_H(\ell)-1}{\ell}\geq \frac{P_H(k)-1}{k}.\label{eq Pl>Pk}
\end{align}
We claim that the inequality is strict. If the equality holds, then  by Lemma~\ref{lem monotone}, we have $k=1, \ell =2$, and a general element in $|L|$ is reducible, but this contradicts Bertini's theorem as $L$ is globally generated ample.

Now as $L\equiv H^{\otimes \ell}$ and
$N\equiv H^{\otimes k}$, by the Kodaira vanishing and the Riemann--Roch formula, $\mathrm{h}^0(X,L)=P_H(\ell)$ and $\mathrm{h}^0(X,N)=P_H(k)$.
So combining \eqref{eq k<lr} and \eqref{eq Pl>Pk}, we have
\[r\geq \frac{k(\mathrm{h}^0(X,L)-1)}{\ell }=\frac{k(P_H(\ell)-1)}{\ell }>P_H(k)-1= \mathrm{h}^0(X,N)-1.\]
The proof is completed.
\end{proof}

\section{Stability of syzygy bundles on complete intersections}\label{section 3}
In this section, we prove the stability of syzygy bundles on Fano or Calabi--Yau complete intersections which is a generalization of \cite[Proposition 1.1]{coandua2011stability}.

\begin{thm}\label{thm ci stable}
Let $X$ be a smooth complete intersection in $\mathbb{P}^n$ of dimension  $\geq 3$ such that $-K_X$ is nef. Denote by $\mathcal{O}_X(d)=\mathcal{O}_{\mathbb{P}^n}(d)|_X$.
Then $M_{\mathcal{O}_X(d)}$ is $\mu_{\mathcal{O}_X(1)}$-stable for any $d\geq 1$.
\end{thm}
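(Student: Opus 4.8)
The plan is to read Theorem~\ref{thm ci stable} off the criterion, Theorem~\ref{thm: criterion}, applied with $H=\mathcal{O}_X(1)$ and $L=\mathcal{O}_X(d)$. For $d\geq 1$ the sheaf $\mathcal{O}_X(d)=\mathcal{O}_{\mathbb{P}^n}(d)|_X$ is globally generated and ample, and $\dim X\geq 3\geq 2$, so $L$ and $X$ are of the required type. Once I know $\rho(X)=1$, every ample class is a positive multiple of $H$, so $\mu_H$-stability is the same as $\mu_{\mathcal{O}_X(1)}$-stability; hence it is enough to check the three hypotheses of Theorem~\ref{thm: criterion}.

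Hypotheses (1) and (2) are quick. For (1), since $X\subset\mathbb{P}^n$ is a smooth complete intersection of dimension $\geq 3$, the Grothendieck--Lefschetz theorem says that restriction is an isomorphism $\operatorname{Pic}(\mathbb{P}^n)\xrightarrow{\sim}\operatorname{Pic}(X)$, so $\operatorname{Pic}(X)=\mathbb{Z}\cdot\mathcal{O}_X(1)$, $\rho(X)=1$, and $H=\mathcal{O}_X(1)$ is the ample generator of the N\'eron--Severi group. Hypothesis (2), that $-K_X$ is nef, is exactly the standing assumption.

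The substance is hypothesis (3), which I would get from the stronger assertion that $P_{\mathcal{O}_X(1)}(t)$ has non-negative coefficients; by the Remark after Theorem~\ref{thm: criterion} this yields both $a_i\geq 0$ for all $i$ and $P_H(1)>0$. To prove non-negativity I would compute the Hilbert polynomial from the Hilbert series of the homogeneous coordinate ring. Writing $(d_1,\dots,d_c)$ for the multidegree and $m=\dim X=n-c$, and using $1-t^{d_i}=(1-t)(1+t+\cdots+t^{d_i-1})$, the series is
\[
\frac{\prod_{i=1}^c(1-t^{d_i})}{(1-t)^{n+1}}=\frac{Q(t)}{(1-t)^{m+1}},\qquad Q(t)=\prod_{i=1}^c\bigl(1+t+\cdots+t^{d_i-1}\bigr)=\sum_{j=0}^D b_j t^j,
\]
where $D=\sum_i d_i-c$ and the $b_j$ are palindromic, $b_j=b_{D-j}\geq 0$. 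Expanding $(1-t)^{-(m+1)}=\sum_{k\geq0}\binom{k+m}{m}t^k$ identifies the Hilbert polynomial as
\[
P_{\mathcal{O}_X(1)}(t)=\sum_{j=0}^D b_j\binom{t+m-j}{m}.
\]
The nef anticanonical condition $\sum_i d_i\leq n+1$ is then equivalent to $D\leq m+1$ (and to $e:=\sum_i d_i-n-1=D-m-1\leq 0$, so that $K_X=\mathcal{O}_X(e)$).

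The main obstacle is the purely combinatorial claim that $\sum_{j=0}^D b_j\binom{t+m-j}{m}$ has non-negative coefficients when the $b_j\geq 0$ are palindromic and $D\leq m+1$. This is delicate: the individual terms $\binom{t+m-j}{m}=\tfrac1{m!}\prod_{s=1-j}^{m-j}(t+s)$ acquire negative coefficients as soon as $j\geq 2$, and a naive induction on $c$ through $P_X(t)=P_{X'}(t)-P_{X'}(t-d_c)$ fails, since the operator $t^i\mapsto t^i-(t-d_c)^i$ does not preserve non-negativity of coefficients. The natural device is to use the palindromy $b_j=b_{D-j}$---a shadow of Serre duality $P(t)=(-1)^m P(e-t)$---to group the terms $j$ and $D-j$, whose shifts $1-j,\dots,m-j$ reach at most $-m$ into the negative range precisely because $D\leq m+1$. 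However, one checks on examples that individual symmetric pairs need \emph{not} have non-negative coefficients, so the argument cannot be term-by-term: the positive contributions of the pairs with small $j$ must be shown to dominate globally. Carrying out this global estimate---controlling the elementary-symmetric-function coefficients of the $\binom{t+m-j}{m}$ under the constraint $D\leq m+1$---is the heart of the matter and the step I expect to require the most care.
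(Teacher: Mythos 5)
Your reduction is correct and coincides with the paper's: hypotheses (1) and (2) of Theorem~\ref{thm: criterion} are disposed of as you say (the paper also invokes the Lefschetz theorem for $\mathrm{Pic}(X)=\mathbb{Z}[\mathcal{O}_X(1)]$, with a small deformation argument to handle the fact that the hypersurfaces cutting out $X$ need not individually be smooth), and everything rests on the non-negativity of the coefficients of $P_{\mathcal{O}_X(1)}(t)$ when $\sum_i d_i\leq n+1$, which is the paper's Theorem~\ref{thm ci poly>0}. Your Hilbert-series reformulation $P_{\mathcal{O}_X(1)}(t)=\sum_{j=0}^{D}b_j\binom{t+m-j}{m}$ with $b_j\geq 0$ palindromic and $D\leq m+1$ is an accurate equivalent statement. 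But the proposal stops exactly at the heart of the matter: you correctly diagnose that term-by-term pairing via palindromy fails and that the naive codimension induction $P_X(t)=P_{X'}(t)-P_{X'}(t-d_c)$ fails, and you then leave the required ``global estimate'' entirely unproved. That is a genuine gap: what you have established is only the (routine) reduction, while the combinatorial positivity is the actual content of the theorem.

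For comparison, the paper closes this gap not by a global estimate on symmetric functions but by a recursion that lowers the \emph{ambient dimension} rather than the codimension: with $F_n(t;d_1,\dots,d_k)=\sum_{I\subset{\bf k}}(-1)^{|I|}\binom{t+n-d_I}{n}$, Lemma~\ref{lemma 1.2} gives $nF_n(t;d_1,\dots,d_k)=(t+n-\sum_i d_i)F_{n-1}(t;d_1,\dots,d_k)+\sum_i d_iF_{n-1}(t;d_1,\dots,\hat{d_i},\dots,d_k)$, and when $\sum_i d_i\leq n$ the prefactor $t+n-\sum_i d_i$ has non-negative coefficients, so induction on $n+k$ applies. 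The extremal Calabi--Yau case $\sum_i d_i=n+1$, where this prefactor degenerates to $t-1$, is handled separately by the symmetry Lemma~\ref{lem: n+1 Sj}, which shows each coefficient of $n!F_n$ either vanishes or equals twice the corresponding coefficient with $d_k$ removed --- precisely the Serre-duality/palindromy phenomenon you flagged, but exploited only at the boundary, where it reduces $(n,k)$ to $(n,k-1)$ with $\sum_{i<k}d_i\leq n$. So your instinct that the duality symmetry is what rescues the extremal case was sound; the missing idea is the dimension-lowering identity that makes the interior cases inductive, and without something of that kind your proposed approach does not yet constitute a proof.
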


The proof of Theorem~\ref{thm ci stable} is by Theorem~\ref{thm: criterion} and the following theorem on the non-negativity of Hilbert polynomials of Fano or Calabi--Yau complete intersections.

\begin{thm}\label{thm ci poly>0}
 Let $n> k\geq 0$ be integers. Let $d_1, d_2, \dots, d_k$ be positive integers such that $\sum_{i=1}^kd_i\leq n+1$.
Let $X$ be a smooth complete intersection in $\mathbb{P}^n$ of multi-degree $(d_1, d_2, \dots, d_k)$. Denote by $\mathcal{O}_X(1)=\mathcal{O}_{\mathbb{P}^n}(1)|_X$. Then the Hilbert polynomial
$P_{\mathcal{O}_X(1)}(t)$ is a polynomial with non-negative coefficients.
\end{thm}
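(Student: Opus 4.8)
The plan is to reduce the statement to an explicit positivity for each coefficient and then exploit the symmetry coming from Serre duality together with the numerical constraint $\sum_i d_i\le n+1$. First I would read off $P(t)$ from the Koszul resolution of $\mathcal{O}_X$ in $\mathbb{P}^n$. Writing $Q(t)=\binom{t+n}{n}$ for the Hilbert polynomial of $\mathbb{P}^n$ and $\sigma_S=\sum_{r\in S}d_r$, this gives
\[P(t)=\sum_{S\subseteq\{1,\dots,k\}}(-1)^{|S|}Q(t-\sigma_S),\]
equivalently, from the Hilbert series $\prod_i(1-z^{d_i})/(1-z)^{n+1}$, the form $P(t)=\sum_{j=0}^{D}c_j\binom{t+m-1-j}{m-1}$, where $m=n+1-k$, the $c_j\ge 0$ are the coefficients of the palindromic polynomial $\prod_i(1+z+\dots+z^{d_i-1})$, and $D=\sum_i d_i-k$. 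The hypothesis $\sum_i d_i\le n+1$ is precisely the inequality $D\le m$, and I would record that $(c_j)$ is symmetric, $c_j=c_{D-j}$, and unimodal.

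Next I would convert coefficient-positivity into an integral statement. Using the identity $\Delta_d f(t)=f(t)-f(t-d)=\int_0^d f'(t-x)\,dx$ and iterating over the $k$ hypersurfaces, one obtains
\[[t^i]P(t)=\frac1{i!}\int_{[0,d_1]\times\cdots\times[0,d_k]}Q^{(k+i)}\Bigl(-\sum_r x_r\Bigr)\,dx_1\cdots dx_k,\]
so it suffices to show each such integral is non-negative. Here I would use that $Q$ has roots $-1,\dots,-n$, hence is symmetric about $-(n+1)/2$, and that by Rolle's theorem every derivative $Q^{(k+i)}$ has only real roots, all lying in $(-n,-1)$. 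Introducing the density $\rho=\mathbf 1_{[0,d_1]}*\cdots*\mathbf 1_{[0,d_k]}$, which is non-negative and symmetric about $\sigma/2$ with $\sigma=\sum_i d_i$, and symmetrizing the integrand, I get
\[[t^i]P(t)=\frac1{2\,i!}\int_0^{\sigma}\bigl(Q^{(k+i)}(-s)+Q^{(k+i)}(s-\sigma)\bigr)\,\rho(s)\,ds.\]
The two arguments $-s$ and $s-\sigma$ are symmetric about $-\sigma/2$, and the Fano/Calabi--Yau bound $\sigma\le n+1$ keeps the whole range of integration inside $[-(n+1),0]$; this is the arithmetic shadow of the Serre-duality functional equation $P(t)=(-1)^{\dim X}P\bigl(-(n+1-\sigma)-t\bigr)$.

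The hard part will be that the integrand above is \emph{not} pointwise non-negative: near the centre $s=\sigma/2$ the symmetrized value $Q^{(k+i)}(-\sigma/2\pm\tau)$ can be negative, and the positivity of the coefficient is a genuinely global effect (the negativity of the ``middle'' contribution is cancelled by positive contributions away from the centre). To handle this I would exploit the special shape of $\rho$: it is a B-spline, i.e.\ a Peano kernel for a $k$-th divided difference, hence totally positive, and its distributional derivatives are signed sums of point masses at the partial sums $\sigma_S$. I would integrate by parts to transfer derivatives from $Q^{(k+i)}$ onto $\rho$, using the sign structure of $\rho$ and its derivatives to rewrite the integral as a manifestly non-negative combination of values of $Q^{(k+i)}$ evaluated beyond its largest root, where $Q^{(k+i)}>0$.

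As a fallback route I would keep an induction on the number $k$ of hypersurfaces, based on the symmetric difference operator $f(w)\mapsto f(w+d/2)-f(w-d/2)$ acting on the recentred polynomial $P\bigl(t-(n+1-\sigma)/2\bigr)$, which manifestly sends non-negative coefficients to non-negative coefficients; the delicate point there is that the base case $\mathbb{P}^n$ is not itself non-negative after recentring, so the induction must be organized to enter the non-negative range and then remain in it. I expect the total-positivity argument for the B-spline $\rho$ to give the cleanest proof, and this global positivity to be the main technical obstacle.
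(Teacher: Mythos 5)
Your reductions are all correct as far as they go: the Koszul expression, the identity $[t^i]P(t)=\frac{1}{i!}\int_{[0,d_1]\times\cdots\times[0,d_k]}Q^{(k+i)}\bigl(-\sum_r x_r\bigr)\,dx$, the symmetrization against the B-spline $\rho$, the Serre-duality functional equation, and the diagnosis that positivity is a global cancellation rather than a pointwise one. But the step that would constitute the proof is missing, and the mechanism you propose for it cannot work as described. Integration by parts against $\rho$ is circular: as a distribution $\rho^{(k)}=\sum_{S}(-1)^{|S|}\delta_{\sigma_S}$, so transferring all $k$ derivatives returns you exactly to the alternating sum $\frac{1}{i!}\sum_S(-1)^{|S|}Q^{(i)}(-\sigma_S)$ you started from, while stopping after $j<k$ steps pairs an intermediate (still non-negative) B-spline against $Q^{(k+i-j)}$, which still changes sign on $(-n,-1)$ --- the same difficulty reappears at every level. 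Total positivity of the B-spline kernel controls variation-diminishing properties (numbers of sign changes of convolutions), not the sign of a single pairing $\int Q^{(k+i)}(-s)\rho(s)\,ds$ against a sign-changing polynomial; nothing in your outline actually produces the promised ``manifestly non-negative combination of values of $Q^{(k+i)}$ beyond its largest root,'' and the structural reason is that your framework fixes the ambient polynomial $Q=\binom{t+n}{n}$ and has no descent in $n$.

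The fallback route fails for a concrete reason: its natural induction target is false. The fully recentred polynomial $P\bigl(t-(n+1-\sigma)/2\bigr)$ need not have non-negative coefficients in the Fano range --- already for the hyperplane $\mathbb{P}^2\subset\mathbb{P}^3$ (so $k=1$, $d_1=1$, $\sigma=1$) it equals $\binom{t+1/2}{2}=\frac{1}{2}\bigl(t^2-\frac14\bigr)$. So although the symmetric difference $f(w)\mapsto f(w+d/2)-f(w-d/2)$ does preserve non-negativity of coefficients, neither the base case nor the intermediate stages lie in the non-negative cone, and the final shift $t\mapsto t+(n+1-\sigma)/2$ that restores positivity does not commute with the recursion; there is no way to ``enter and remain in'' the non-negative range. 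What closes the argument in the paper is a recursion descending in $n$ rather than in $k$: writing $F_n(t;d_1,\dots,d_k)=\sum_S(-1)^{|S|}\binom{t+n-d_S}{n}$, the identity $n\binom{t+n}{n}=(t+n)\binom{t+n-1}{n-1}$ yields
\[
nF_n(t;d_1,\dots,d_k)=\Bigl(t+n-\sum_i d_i\Bigr)F_{n-1}(t;d_1,\dots,d_k)+\sum_i d_i\,F_{n-1}(t;d_1,\dots,\hat{d_i},\dots,d_k),
\]
whose first factor has non-negative coefficients precisely when $\sigma\le n$; the boundary case $\sigma=n+1$ is then handled by the parity consequence of exactly the duality you noticed (the coefficient of $t^{n-j}$ vanishes when $k+j$ is odd and doubles the $(k-1)$-hypersurface coefficient when $k+j$ is even, reducing to $\sigma-d_k\le n$). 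If you want to salvage your analytic picture, some such descent in the ambient dimension is the missing ingredient; without it, the global cancellation you correctly identified as the crux remains unproved.
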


Before giving the proof of these theorems, we prove some basic properties on binomial coefficient polynomials.

Fix integers $n>0$ and $k\geq 0$. Let $d_1, d_2, \dots, d_k$ be positive integers.
Recall that the {\it binomial coefficient polynomial} $\binom{t}{n}$ in $t$ is defined by $\binom{t}{n}=\frac{1}{n!}\prod_{i=0}^{n-1}(t-i)$.

Denote ${\bf k}=\{1, 2, \dots, k\}$.
For a subset $I\subset {\bf k}$, denote $d_I=\sum_{i\in I}d_i$.
We define a polynomial as the following:
\begin{align*}
F_n(t; d_1, d_2, \dots, d_k): =\sum_{I\subset {\bf k}}(-1)^{|I|}\binom{t+n-d_I}{n}.
\end{align*}
For example, $F_n(t)  = \binom{t+n}{n}$ and
$F_n(t; d_1)  = \binom{t+n}{n}-\binom{t+n-d_1}{n}$.

\begin{lemma}\label{lemma 1.1}If $n\geq 2$ and $k\geq 1$, then
\[\sum_{I\subset {\bf k}}(-1)^{|I|}d_{{\bf k}\setminus I} \binom{t+n-1-d_I}{n-1}=\sum_{i=1}^kd_i F_{n-1}(t; d_1, \dots, \hat{d_i}, \dots, d_k).\]
Here the symbol $\hat{d_i}$ means to remove $d_i$ from the sequence.
\end{lemma}

\begin{proof}
\begin{align*}
{}&\sum_{I\subset {\bf k}}(-1)^{|I|}d_{{\bf k}\setminus I} \binom{t+n-1-d_I}{n-1}\\
={}& \sum_{I\subset {\bf k}}(-1)^{|I|}\sum_{i\in {\bf k}\setminus I}d_i \binom{t+n-1-d_I}{n-1}\\
={}&\sum_{i=1}^k d_i\sum_{I\subset {\bf k}\setminus\{i\}}(-1)^{|I|} \binom{t+n-1-d_I}{n-1}\\
={}&\sum_{i=1}^kd_i F_{n-1}(t; d_1, \dots, \hat{d_i}, \dots, d_k).
\end{align*}
\end{proof}

\begin{lemma}\label{lemma 1.2}If $n\geq 2$ and $k\geq 1$, then
\begin{align*}
{}& nF_n( t; d_1, d_2, \dots, d_k)\\
={}&(t+n-\sum_{i=1}^kd_i)F_{n-1}(t; d_1, d_2, \dots, d_k)+\sum_{i=1}^kd_i F_{n-1}(t; d_1, \dots, \hat{d_i}, \dots, d_k).
\end{align*}
\end{lemma}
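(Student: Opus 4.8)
The plan is to drop the index $n$ to $n-1$ on the left-hand side using a standard recurrence for binomial coefficient polynomials, and then to split the resulting sum into two pieces matching the two summands on the right-hand side, invoking Lemma~\ref{lemma 1.1} to identify the second piece.

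First I would record the polynomial identity $n\binom{m}{n}=m\binom{m-1}{n-1}$, which holds as an identity of polynomials in $m$ since both sides equal $\frac{1}{(n-1)!}\prod_{i=0}^{n-1}(m-i)$. Applying it with $m=t+n-d_I$ for each subset $I\subset{\bf k}$ gives
\[
n\binom{t+n-d_I}{n}=(t+n-d_I)\binom{t+n-1-d_I}{n-1},
\]
and summing against $(-1)^{|I|}$ over all $I\subset{\bf k}$ yields
\[
nF_n(t; d_1, \dots, d_k)=\sum_{I\subset{\bf k}}(-1)^{|I|}(t+n-d_I)\binom{t+n-1-d_I}{n-1}.
\]

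The key step is then to decompose the linear factor $t+n-d_I$. Writing $d_{\bf k}=\sum_{i=1}^k d_i$ and using $d_I+d_{{\bf k}\setminus I}=d_{\bf k}$, I would split
\[
t+n-d_I=\left(t+n-d_{\bf k}\right)+d_{{\bf k}\setminus I}.
\]
This separates the sum into a part with $I$-independent coefficient and a part carrying the weights $d_{{\bf k}\setminus I}$. The first part factors out $t+n-\sum_{i=1}^k d_i$ and leaves exactly $\sum_{I\subset{\bf k}}(-1)^{|I|}\binom{t+n-1-d_I}{n-1}=F_{n-1}(t; d_1, \dots, d_k)$. The second part is precisely the left-hand side of Lemma~\ref{lemma 1.1}, which rewrites it as $\sum_{i=1}^k d_i F_{n-1}(t; d_1, \dots, \hat{d_i}, \dots, d_k)$. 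Adding the two contributions gives the claimed formula.

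This argument is essentially a bookkeeping computation, so I do not anticipate a real obstacle; the only point requiring foresight is choosing the decomposition of $t+n-d_I$ so that one summand is independent of $I$ (producing the clean factor $t+n-\sum_{i=1}^k d_i$) while the other matches the combinatorial weight $d_{{\bf k}\setminus I}$ appearing in Lemma~\ref{lemma 1.1}. The recurrence $n\binom{m}{n}=m\binom{m-1}{n-1}$ is what makes the index drop from $n$ to $n-1$ simultaneously on both sides of the identity.
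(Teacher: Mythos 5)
Your proposal is correct and follows exactly the paper's own proof: the same absorption identity $n\binom{t+n-d_I}{n}=(t+n-d_I)\binom{t+n-1-d_I}{n-1}$, the same decomposition $t+n-d_I=(t+n-\sum_{i=1}^k d_i)+d_{{\bf k}\setminus I}$, and the same appeal to Lemma~\ref{lemma 1.1} for the second piece. No differences worth noting.
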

\begin{proof}
\begin{align*}
{}&nF_n( t; d_1, d_2, \dots, d_k)\\
={}&\sum_{I\subset {\bf k}}(-1)^{|I|}n\binom{t+n-d_I}{n}\\
={}&\sum_{I\subset {\bf k}}(-1)^{|I|}(t+n-d_I)\binom{t+n-1-d_I}{n-1}\\
={}&(t+n-\sum_{i=1}^kd_i)\sum_{I\subset {\bf k}}(-1)^{|I|} \binom{t+n-1-d_I}{n-1}
+\sum_{I\subset {\bf k}}(-1)^{|I|}d_{{\bf k}\setminus I} \binom{t+n-1-d_I}{n-1}\\
={}&(t+n-\sum_{i=1}^kd_i)F_{n-1}(t; d_1, d_2, \dots, d_k)+\sum_{i=1}^kd_i F_{n-1}(t; d_1, \dots, \hat{d_i}, \dots, d_k).
\end{align*}
Here for the last step, we use Lemma~\ref{lemma 1.1}.
\end{proof}

Note that $\deg F_n( t; d_1, d_2, \dots, d_k)\leq n$.

\begin{lemma}\label{lem: n+1 Sj}For $0\leq j\leq n$, denote the coefficient of $t^{n-j}$ in $n!F_n( t; d_1, d_2, \dots, d_k)$ by $S_j(n; d_1, d_2, \dots, d_k)$.
If $\sum_{i=1}^kd_i=n+1$, then
\[
S_j(n; d_1, d_2, \dots, d_k)=\begin{cases} 2S_j(n; d_1, d_2, \dots, d_{k-1})& \text{if } k+j \text{ is even;}\\
0 & \text{if } k+j \text{ is odd.}\end{cases}\]
\end{lemma}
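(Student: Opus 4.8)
The plan is to reduce the statement to two structural identities for the polynomials $F_n$ — a ``peeling'' recursion in the last exponent $d_k$, and a reflection (functional equation) in $t$ — and then to combine them using the hypothesis $\sum_{i=1}^k d_i = n+1$. First I would record the peeling recursion. Splitting the sum defining $F_n(t; d_1, \dots, d_k)$ according to whether $k\in I$, and writing each subset containing $k$ as $J\cup\{k\}$ with $J\subset {\bf k}\setminus\{k\}$, the terms with $k\notin I$ reassemble into $F_n(t; d_1, \dots, d_{k-1})$, while the terms with $k\in I$ contribute $-F_n(t-d_k; d_1, \dots, d_{k-1})$, the shift $-d_k$ being absorbed into the variable. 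Writing $G(t):=F_n(t; d_1, \dots, d_{k-1})$, this gives
\[F_n(t; d_1, \dots, d_k) = G(t) - G(t - d_k).\]

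Second, I would establish a reflection formula for $F_n$ valid for an arbitrary multidegree. Applying the identity $\binom{-s}{n} = (-1)^n \binom{s+n-1}{n}$ to each summand $\binom{t+n-d_I}{n}$ after the substitution $t\mapsto -t-c$, and then re-indexing the alternating sum by the complement $I\mapsto {\bf k}\setminus I$ (so that $|I|\mapsto m-|I|$ and $d_I\mapsto D-d_I$, where $D=\sum_{i=1}^m d_i$), all terms realign into a copy of $F_n(t;\cdots)$ precisely when the shift is chosen as $c = n+1-D$. This yields
\[F_n(t; d_1, \dots, d_m) = (-1)^{n+m}\, F_n\bigl(-t - (n+1-D);\, d_1, \dots, d_m\bigr), \qquad D = \sum_{i=1}^m d_i.\]
Geometrically this is the Serre-duality symmetry of the Hilbert polynomial, but I would prove it as a formal polynomial identity so as not to presuppose $D=n+1$.

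Now I would combine the two. Apply the reflection formula to $G=F_n(\cdot;\, d_1, \dots, d_{k-1})$, whose exponents sum to $D'=\sum_{i=1}^{k-1} d_i = n+1-d_k$ by hypothesis; since then $n+1-D'=d_k$, the reflection formula reads $G(-t-d_k)=(-1)^{n+k-1}G(t)$, and substituting $t\mapsto -t$ gives $G(t-d_k)=(-1)^{n+k-1}G(-t)$. Feeding this into the peeling recursion produces the clean identity
\[F_n(t; d_1, \dots, d_k) = G(t) + (-1)^{n+k} G(-t).\]
Writing $n!\,G(t)=\sum_j S_j(n; d_1, \dots, d_{k-1})\, t^{n-j}$, the coefficient of $t^{n-j}$ in $n!\,G(-t)$ is $(-1)^{n-j}$ times that in $n!\,G(t)$, so the coefficient of $t^{n-j}$ in $n!\,F_n(t; d_1, \dots, d_k)$ equals $\bigl(1+(-1)^{n+k}(-1)^{n-j}\bigr)S_j(n; d_1, \dots, d_{k-1}) = \bigl(1+(-1)^{k+j}\bigr)S_j(n; d_1, \dots, d_{k-1})$, which is $2S_j$ when $k+j$ is even and $0$ when $k+j$ is odd. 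This is exactly the asserted formula.

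I expect the main obstacle to be getting the reflection formula right, in particular tracking the three independent sign sources — the factor $(-1)^n$ from $\binom{-s}{n}=(-1)^n\binom{s+n-1}{n}$, the factor $(-1)^m$ from reversing the alternating sum under complementation, and the factor $(-1)^{n-j}$ from $t\mapsto -t$ — and pinning down the unique shift $c=n+1-D$ that makes the reflected sum close up on itself. The peeling recursion and the final coefficient extraction are then routine bookkeeping once these signs are under control.
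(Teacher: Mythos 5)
Your proof is correct, and it packages the argument differently from the paper's. I verified your three ingredients: the peeling recursion $F_n(t; d_1,\dots,d_k)=G(t)-G(t-d_k)$ with $G=F_n(\cdot\,; d_1,\dots,d_{k-1})$; the reflection formula $F_n(t; d_1,\dots,d_m)=(-1)^{n+m}F_n(-t-(n+1-D); d_1,\dots,d_m)$, where the shift $c=n+1-D$ is indeed forced by matching $t+c+D-1$ with $t+n$ after complementation; and the final sign bookkeeping $(-1)^{n+k}(-1)^{n-j}=(-1)^{k+j}$. The paper proves the lemma entirely at the coefficient level: it writes $S_j(n; d_1,\dots,d_k)=\sum_{I\subset {\bf k}}(-1)^{|I|}\sigma_{j}(1-d_I,\dots,n-d_I)$, splits the sum by whether $k\in I$ (your peeling step in coefficient form), and re-indexes the $k\in I$ part by complementation inside the \emph{full} index set ${\bf k}$, using $d_I+d_{{\bf k}\setminus I}=n+1$ together with the parity $\sigma_j(-x_1,\dots,-x_n)=(-1)^j\sigma_j(x_1,\dots,x_n)$ to extract the factor $(1+(-1)^{j+k})$ directly. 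You instead reflect the \emph{smaller} polynomial $G$, whose degree sum is $n+1-d_k$ rather than $n+1$, which is precisely why you need the functional equation in its general form with arbitrary $D$; the hypothesis $\sum_{i=1}^k d_i=n+1$ then enters only through the coincidence that the reflection shift for $G$ equals $d_k$, aligning with the peeling shift. Your route costs one extra lemma but buys two things the paper never states: the general Serre-duality identity for $F_n$ (valid for any multidegree, and of independent interest, since it formalizes $K_X=\mathcal{O}_X(D-n-1)$ for the complete intersection), and the clean polynomial identity $F_n(t; d_1,\dots,d_k)=G(t)+(-1)^{n+k}G(-t)$, from which the coefficient statement is immediate. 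The paper's computation is shorter and self-contained, but more opaque about exactly where and how the hypothesis $D=n+1$ is used.
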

\begin{proof}
Denote $\sigma_j(t_1, t_2, \dots, t_n)$ to be the $j$-th elementary symmetric polynomial in $t_1, \dots, t_n$.
Note that the coefficient of $t^{n-j}$ in $n!\binom{t+n-m}{n}$ is just $\sigma_{j}(1-m, 2-m, \dots, n-m)$. So by the definition of $F_n( t; d_1, d_2, \dots, d_k)$,
\[
S_j(n; d_1, d_2, \dots, d_k)=\sum_{I\subset {\bf k}}(-1)^{|I|}\sigma_{j}(1-d_I, 2-d_I, \dots, n-d_I).
\]
By the assumption that $\sum_{i=1}^kd_i=n+1$, we have $d_I+d_{{\bf k}\setminus I}=n+1$.
So
\begin{align*}
{}&(-1)^{|{\bf k}\setminus I|}\sigma_{j}(1-d_{{\bf k}\setminus I}, 2-d_{{\bf k}\setminus I}, \dots, n-d_{{\bf k}\setminus I})\\
={}&(-1)^{|I|+k}\sigma_{j}(d_{I}-n, d_{I}-n+1, \dots, d_{I}-1)\\
={}&(-1)^{|I|+k+j}\sigma_{j}(1-d_{I}, 2-d_{I}, \dots, n-d_{I}).
\end{align*}
Then
\begin{align*}
{}&S_j(n; d_1, d_2, \dots, d_k)\\
={}&\sum_{I\subset {\bf k}}(-1)^{|I|}\sigma_{j}(1-d_I, 2-d_I, \dots, n-d_I)\\
={}&\sum_{I\subset {\bf k}\setminus\{k\}}(-1)^{|I|}\sigma_{j}(1-d_I, 2-d_I, \dots, n-d_I)+\sum_{k\in I\subset {\bf k}}(-1)^{|I|}\sigma_{j}(1-d_I, 2-d_I, \dots, n-d_I)\\
={}&\sum_{ I\subset {\bf k}\setminus\{k\}}(-1)^{|I|}\sigma_{j}(1-d_I, 2-d_I, \dots, n-d_I)\\{}&+ \sum_{{\bf k}\setminus I\subset {\bf k}\setminus\{k\}}(-1)^{|{\bf k}\setminus I|+j+k}\sigma_{j}(1-d_{{\bf k}\setminus I}, 2-d_{{\bf k}\setminus I}, \dots, n-d_{{\bf k}\setminus I})\\
={}&(1+(-1)^{j+k}) \sum_{ I\subset {\bf k}\setminus\{k\}}(-1)^{|I|}\sigma_{j}(1-d_I, 2-d_I, \dots, n-d_I)\\
={}&(1+(-1)^{j+k})S_j(n; d_1, d_2, \dots, d_{k-1}).
\end{align*}
\end{proof}

\begin{thm}\label{thm Fn>=0}
If $\sum_{i=1}^kd_i\leq n+1$, then
$F_n( t; d_1, d_2, \dots, d_k)$ is a polynomial in $t$ with non-negative coefficients.
\end{thm}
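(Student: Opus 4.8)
The plan is to induct on $n$, treating the hypothesis $\sum_{i=1}^k d_i\le n+1$ as two regimes: the \emph{interior} regime $\sum_{i=1}^k d_i\le n$, where the recursion of Lemma~\ref{lemma 1.2} manifestly preserves non-negativity of coefficients, and the \emph{boundary} regime $\sum_{i=1}^k d_i=n+1$, where it does not and one must instead appeal to the reflection identity of Lemma~\ref{lem: n+1 Sj}.

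First I would record the base cases. For $k=0$ one has $F_n(t)=\binom{t+n}{n}=\frac{1}{n!}\prod_{i=1}^n(t+i)$, a product of linear factors with positive coefficients, so all coefficients are non-negative; this also anchors the recursion once every $d_i$ has been stripped off. For $n=1$ a short computation, using $\sum_{I\subset \mathbf{k}}(-1)^{|I|}=0$ when $k\ge 1$, shows that $F_1(t;d_1,\dots,d_k)$ equals the constant $d_1\ge 0$ when $k=1$ and vanishes identically when $k\ge 2$ (the constraint $\sum d_i\le 2$ forcing $d_1=d_2=1$).

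For the inductive step with $n\ge 2$ and $k\ge 1$, in the interior regime I would apply Lemma~\ref{lemma 1.2} to write $n\,F_n(t;d_1,\dots,d_k)$ as $(t+n-\sum_i d_i)\,F_{n-1}(t;d_1,\dots,d_k)$ plus $\sum_i d_i\,F_{n-1}(t;d_1,\dots,\hat{d_i},\dots,d_k)$. Here $n-\sum_i d_i\ge 0$ makes the linear factor a polynomial with non-negative coefficients; the term $F_{n-1}(t;d_1,\dots,d_k)$ has degree-sum $\sum_i d_i\le n=(n-1)+1$, and each $F_{n-1}(t;d_1,\dots,\hat{d_i},\dots,d_k)$ has degree-sum $\sum_{j\ne i}d_j\le n-1$, so the induction hypothesis applies to all of them. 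Since sums and products of polynomials with non-negative coefficients retain that property and $n>0$, the conclusion follows.

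The boundary regime $\sum_i d_i=n+1$ is the crux, and the step I expect to be the main obstacle: the linear factor in Lemma~\ref{lemma 1.2} degenerates to $t-1$, which carries a negative coefficient, so the recursion no longer preserves positivity. To circumvent this I would read off coefficients directly and invoke Lemma~\ref{lem: n+1 Sj}: writing $S_j(n;\cdot)$ for the coefficient of $t^{n-j}$ in $n!\,F_n(\cdot)$, that lemma gives $S_j(n;d_1,\dots,d_k)=2\,S_j(n;d_1,\dots,d_{k-1})$ when $k+j$ is even and $0$ otherwise. Because $\sum_{i=1}^{k-1}d_i=n+1-d_k\le n$, the shortened tuple lands back in the interior regime already settled (or in the $k-1=0$ base case), so $S_j(n;d_1,\dots,d_{k-1})\ge 0$; hence $S_j(n;d_1,\dots,d_k)\ge 0$ in both branches, which is precisely the non-negativity of all coefficients of $F_n(t;d_1,\dots,d_k)$. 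This closes the induction.
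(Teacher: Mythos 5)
Your proposal is correct and follows essentially the same route as the paper: identical base cases ($k=0$ and $n=1$), Lemma~\ref{lemma 1.2} for the case $\sum_i d_i\le n$, and Lemma~\ref{lem: n+1 Sj} to reduce the boundary case $\sum_i d_i=n+1$ to the shortened tuple $(d_1,\dots,d_{k-1})$, whose degree-sum is at most $n$. The only difference is bookkeeping --- you induct on $n$ and order the two regimes within each step, while the paper inducts on $n+k$ --- and both schemes are valid.
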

\begin{proof}
 We do induction on $n+k$.

 If $k=0$, then $F_n( t; d_1, d_2, \dots, d_k)=\binom{t+n}{n}$ has non-negative coefficients.

 If $n=1$ and $k>0$, then $k\leq 2$ and
\[F_1( t; d_1, d_2, \dots, d_k)=\begin{cases} d_1 & \text{ if } k=1;\\
0 & \text{ if } k=2.
\end{cases}\]

Now we consider $n>1$ and $k>0$. If $\sum_{i=1}^kd_i=n+1$, then by Lemma~\ref{lem: n+1 Sj}, we get the conclusion from the inductive hypothesis for $(n, k-1)$; if $\sum_{i=1}^kd_i\leq n$, then by Lemma~\ref{lemma 1.2}, we get the conclusion from the inductive hypothesis for $(n-1, k-1)$ and $(n-1, k)$.
\end{proof}

\begin{proof}[Proof of Theorem~\ref{thm ci poly>0}]
By an inductive argument by exact sequences (see \cite[Proposition~7.6]{GTM52}), it is well-known that the Hilbert polynomial
$P_{\mathcal{O}_X(1)}(t)$ is exactly $F_n(t; d_1, d_2, \dots, d_k)$, so the theorem follows from Theorem~\ref{thm Fn>=0}.
\end{proof}

\begin{proof}[Proof of Theorem~\ref{thm ci stable}]
Note that $\mathcal{O}_{X}(d)$ is globally generated ample for any $d\geq 1$.
Also $\mathrm{Pic}(X)= \mathbb{Z}[\mathcal{O}_{X}(1)]$ by the Lefschetz theorem inductively (\cite[Example~3.1.25]{positivity1}). Here one should be aware that $X=\bigcap_{i=1}^kX_i$ is a smooth intersection of hypersurfaces $X_i\subset \mathbb{P}^n$ of degree $d_i$ for $1\leq i\leq k$, but $X_i$ might not be smooth. However we can deform $X_i$ into smooth hypersurfaces to get the conclusion as in this case
$\mathrm{Pic}(X)\cong \mathrm{H}^2(X, \mathbb{Z})$
is invariant under deformation.

Then by Theorem~\ref{thm ci poly>0}, all conditions in Theorem~\ref{thm: criterion} are satisfied, and hence we conclude the theorem.
\end{proof}

\section{Stability of syzygy bundles on other varieties of Picard number one}\label{section 4}

Besides complete intersections, there are also other varieties satisfying Theorem~\ref{thm: criterion}.

\begin{cor} \label{cor other varieties} Let $X$ be a smooth projective variety of dimension $\geq 2$ and let $L$ be a globally generated ample line bundle on $X$. Suppose that $X$ is one of the following:
\begin{enumerate}
\item a hyperk\"ahler variety of Picard number $1$;
\item an abelian variety of Picard number $1$;
\item a rational homogeneous variety of Picard number $1$;
\item a weak Calabi--Yau variety of Picard number $1$ of dimension $\leq 4$; or
\item a Fano variety of Picard number $1$ of dimension $\leq5$.
\end{enumerate}
 Then $M_{L}$ is $\mu_L$-stable.
\end{cor}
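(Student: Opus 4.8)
The plan is to obtain all five cases as applications of Theorem~\ref{thm: criterion}. Since $\rho(X)=1$, any globally generated ample $L$ is numerically $H^{\otimes\ell}$ with $\ell\ge1$, so $\mu_L=\ell^{\dim X-1}\mu_H$ on every torsion-free sheaf and $\mu_L$-stability is equivalent to $\mu_H$-stability; it therefore suffices to verify hypotheses (1)--(3) of Theorem~\ref{thm: criterion} for the ample generator $H$ in each case. Hypotheses (1) and (2) are immediate: $K_X\equiv0$ (hence $-K_X$ nef) for abelian, hyperk\"ahler, and weak Calabi--Yau varieties, while $-K_X$ is ample for rational homogeneous and Fano varieties. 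All the work is in the Hilbert-polynomial condition (3), which I would check by computing $P_H(t)=\sum_i a_i t^i$ through Hirzebruch--Riemann--Roch, with $a_i=\tfrac1{i!}\int_X c_1(H)^i\,\mathrm{td}_{n-i}(X)$.

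Three cases are nearly formal. On an abelian variety of dimension $g$ one has $P_H(t)=\tfrac{H^g}{g!}t^g$, a monomial, so (3) is trivial. For a rational homogeneous space $X=G/P$ with $H$ the generator attached to a fundamental weight $\lambda$, Borel--Weil--Bott gives $P_H(m)=\dim V_{m\lambda}$, and the Weyl dimension formula writes $P_H(t)=\prod_{\alpha}\big(\tfrac{\langle\lambda,\alpha^\vee\rangle}{\langle\rho,\alpha^\vee\rangle}t+1\big)$ as a product of linear factors with non-negative coefficients, so all $a_i\ge0$. For a hyperk\"ahler variety the Fujiki relation makes $P_H(t)=\chi(X,tH)$ a polynomial in $q(H)t^2$, where $q$ is the Beauville--Bogomolov form; thus $P_H$ is even in $t$, its odd coefficients vanish, and its even coefficients are non-negative multiples of $q(H)^j>0$ by the positivity of the Riemann--Roch polynomial of hyperk\"ahler manifolds, while $P_H(1)=h^0(X,H)>0$ by Kawamata--Viehweg vanishing. (The hyperk\"ahler case of Picard number $1$ also follows from Theorem~\ref{thm HK}.)

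The weak Calabi--Yau ($\dim\le4$) and Fano ($\dim\le5$) cases are where the dimension bounds enter. Using $\rho(X)=1$, write $c_1(X)\equiv\lambda H$ with $\lambda>0$ in the Fano case and $c_1(X)\equiv0$ in the weak Calabi--Yau case. The point of the bounds is that for every $i\ge2$ one has $n-i\le3$, and then $\mathrm{td}_{n-i}$ lies in $\{1,\tfrac{c_1}2,\tfrac{c_1^2+c_2}{12},\tfrac{c_1c_2}{24}\}$ and involves only $c_1$ and $c_2$. Substituting $c_1=\lambda H$ (resp.\ $0$), each coefficient $a_i$ with $i\ge2$ becomes a non-negative linear combination of the positive number $H^n$ and the single intersection number $c_2(X)\cdot H^{n-2}$. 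Thus condition (3) reduces to the inequality $c_2(X)\cdot H^{n-2}\ge0$ together with $P_H(1)>0$.

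The main obstacle is precisely $c_2(X)\cdot H^{n-2}\ge0$. For weak Calabi--Yau varieties $K_X$ is nef, so $X$ is minimal and this is Miyaoka's inequality for the second Chern class of a minimal variety; moreover $P_H(1)>0$ comes for free, since in dimension $\le4$ the odd Todd contribution to $a_1$ either vanishes or is itself a non-negative multiple of $c_2\cdot H^{n-2}$, and $\chi(\mathcal O_X)\ge0$ by the Beauville--Bogomolov structure. For Fano varieties the situation is genuinely harder: $X$ is uniruled, so Miyaoka's generic-semipositivity argument does not apply, and I would instead invoke the non-negativity of $c_2$ for varieties with nef anticanonical class, obtained from positivity properties of $T_X$. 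I also expect the Fano fivefold case to need a separate argument for $P_H(1)>0$: there $a_1=\int_X H\,\mathrm{td}_4(X)$ is no longer controlled by $c_1,c_2$ alone, so rather than non-negativity of all coefficients I would use $P_H(1)=\chi(X,H)=h^0(X,H)>0$ via effectivity of the ample generator. Securing $c_2(X)\cdot H^{n-2}\ge0$ for Fano varieties is the step I expect to demand the most care, and it is what pins down the bound $\dim\le5$, beyond which $\mathrm{td}_{n-i}$ would involve $c_3,c_4$ of unknown sign.
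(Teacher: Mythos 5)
Your proposal matches the paper's proof essentially step for step: both reduce to Theorem~\ref{thm: criterion} via the equivalence of $\mu_L$- and $\mu_H$-stability and then verify condition (3) case by case with the same inputs --- the monomial Hilbert polynomial for abelian varieties, Borel--Bott--Weil together with the Weyl dimension formula for rational homogeneous spaces, positivity of the Riemann--Roch polynomial for hyperk\"ahler varieties, a Todd-class analysis reducing everything to $c_2(X)\cdot H^{n-2}\geq 0$ plus $\chi(X,\mathcal{O}_X)\geq 0$ (via Beauville--Bogomolov) in the weak Calabi--Yau and Fano cases, and the separate input $P_H(1)=\mathrm{h}^0(X,H)>0$ (H\"oring) for Fano fivefolds, where $\mathrm{td}_4$ is uncontrolled. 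The only cosmetic divergence is that for weak Calabi--Yau varieties you invoke Miyaoka's $c_2$-semipositivity for minimal varieties, whereas the paper uses Ou's theorem (non-negativity of $c_2$ against nef classes whenever $-K_X$ is nef) uniformly in both cases --- which is precisely the result you correctly anticipate needing for the Fano case.
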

\begin{proof}
 Set $n=\dim X$. Take $H$ to be an ample generator of the N\'eron--Severi group of $X$.

In all cases, $-K_X$ is nef.
Note that $\mu_L$-stability and $\mu_H$-stability are equivalent.
So by Theorem~\ref{thm: criterion}, it suffices to check that condition (3) in
Theorem~\ref{thm: criterion} holds in each case. By the Lefschetz principle, we may assume that $X$ is defined over $\mathbb{C}$.

 In case (1), $P_H(t)$ has non-negative coefficients by \cite[Theorem~1.1]{jiang2023positivity}.

In case (2), $P_H(t)=\frac{H^n}{n!}t^n$.

 In case (3), we may write $X=G/P$ where $G$ is a simple Lie group and $P$ is a maximal parabolic subgroup. 
 Then $H$ corresponds to a weight $\lambda$ and 
 by the Borel--Bott--Weil theorem, \[
 P_H(t)=\chi(X,H^{\otimes t})=\prod_{\alpha\in \Phi^+}\left(t\frac{\left<\lambda,\alpha\right>}{\left<\rho,\alpha\right>}+1\right)\]
 is a product of linear polynomials with non-negative coefficients by \cite{Gross2011}. Here $\rho$ is half the sum of the positive roots.
 Then the result follows.

 Finally we consider cases (4) and (5).
 By the Riemann--Roch formula, the Hilbert polynomial is given by
 \[P_H(t)=\chi(X,H^{\otimes t})=\sum_{i=0}^n \frac{\textrm{td}_{n-i}(X)H^i}{i!}t^i. \]
Recall that $\textrm{td}_{0}(X)=1$,
$\textrm{td}_{1}(X)=\frac{1}{2} c_1(X)$, $\textrm{td}_{2}(X)=\frac{1}{12}(c_1(X)^2+c_2(X))$, $\textrm{td}_{3}(X)=\frac{1}{24}c_1(X)c_2(X)$, and $\textrm{td}_{n}(X)=\chi(X, \mathcal{O}_X)$.

We claim that if $X$ is a Fano variety or a weak Calabi--Yau variety, then the constant term $\chi(X, \mathcal{O}_X)\geq 0$. Indeed, if $X$ is a Fano variety, then $\chi(X, \mathcal{O}_X)=1$ by the Kodaira vanishing. On the other hand, if $X$ is a weak Calabi--Yau variety, then by the Beauville--Bogomolov decomposition \cite{beauville1983varietes, bogomolov}, there is a finite \'etale cover $\pi: X'\to X$ such that
\[
X' \cong A\times X_1\times \cdots \times X_m\times Y_1\times \cdots \times Y_k,
\]
where $A$ is an abelian variety, $X_i$ is a Calabi--Yau variety, and $Y_j$ is a hyperk\"{a}hler variety. Note that $\chi(A, \mathcal{O}_A)=0$ if $A$ is not a point, $\chi(X_i, \mathcal{O}_{X_i})=1+(-1)^{\dim X_i}$, and $\chi(Y_j, \mathcal{O}_{Y_j})=\frac{\dim Y_j}{2}+1$, so $\chi(X, \mathcal{O}_X)=\frac{1}{\deg \pi}\chi(X', \mathcal{O}_{X'})\geq 0$.

Since $-K_X=c_1(X)$ is nef, the intersection of $c_2(X)$ with nef line bundles are non-negative by \cite[Corollary~1.5]{Ou23}, so
$\textrm{td}_{n-i}(X)H^i\geq 0$ for $0\leq n-i\leq 3$.

So if $n\leq 4$, then $P_H(t)$ has non-negative coefficients and the proof is completed.

If $n=5$, then all the coefficients of $P_H(t)$ are non-negative except for the coefficient of $t$. But we can still apply Theorem~\ref{thm: criterion} as $P_H(1)=\mathrm{h}^0(X, H)>0$ by \cite[Corollary~1.3]{Horingfano5}.
\end{proof}

We might not expect that for any smooth Fano or weak Calabi--Yau variety of Picard number $1$, $P_H(t)$ has non-negative coefficients, where $H$ is the ample generator of the N\'eron--Severi group. But anyway we can ask the following question.
\begin{question}\label{question 1}
 Let $X$ be a smooth Fano or weak Calabi--Yau variety of Picard number $1$ and let $L$ be a globally generated ample line bundle. Is $M_L$ $\mu_L$-stable?
\end{question}
Questions~\ref{question 1} is known for sufficiently ample $L$ by \cite{rekuski2024stability}.

\section{Stability of syzygy bundles on hyperk\"{a}hler varieties}\label{section 5}

In this section, we give the proof of Theorem~\ref{thm HK}. We use a similar approach to that in \cite[Theorem~1.1]{caucci2021stability}. The proof of \cite[Theorem~1.1]{caucci2021stability} relies on the density of simple abelian varieties in the moduli space of abelian varieties, while we use the fact that hyperk\"{a}hler varieties of Picard number 1 are dense in the moduli space of polarized hyperk\"{a}hler varieties instead.

\begin{proof}[Proof of Theorem~\ref{thm HK}]
By the Lefschetz principle, we may assume that $X$ is defined over $\mathbb{C}$.
By the proof of \cite[Theorem~4.6]{huybrechts1999compact}, we can
consider $\mathcal{X}\to S$ a family of hyperk\"{a}hler varieties over a smooth curve polarized by a relative ample line bundle $\mathcal{L}$, such that $(\mathcal{X},\mathcal{L})_0\cong (X,L)$ for $0\in S$ and
\[S^\circ=:\{s\in S \mid \rho(\mathcal{X}_s)=1 \}\]
is an open dense subset in $S$.

Since $L$ is globally generated, we may assume that $\mathcal{L}_s$ is globally generated for all $s\in S$, up to shrinking $S$ if needed. By Corollary~\ref{cor other varieties},
for any $s\in S^\circ$, the syzygy bundle $M_{\mathcal{L}_s}$ is $\mu_{\mathcal{L}_s}$-stable.

Denote $P(m):=\chi(M_{\mathcal{L}_s}\otimes \mathcal{L}_s^{\otimes m})$ to be the Hilbert polynomial of $M_{\mathcal{L}_s}$, which is independent of $s\in S^{\circ}$ by shrinking $S$.
By \cite[Theorem~4.3.7]{huybrechts2010geometry}, there exists a projective morphism $\mathsf{M}_{\mathcal{X}/S}(P)\to S$
universally corresponding to the moduli functor $\mathcal{M}_{\mathcal{X}/S}(P)\to S$ of semistable sheaves with Hilbert polynomial $P$. Since
$S^\circ$ is dense in $S$, by the properness of the moduli space, there exists a family $\mathcal{F}\in \mathsf{M}_{\mathcal{X}/S}(P)$ such that $\mathcal{F}_s=M_{\mathcal{L}_s}$ for all $s\in S^\circ$. Note that $\mathcal{F}_0$ is a $\mu_L$-semistable
torsion-free sheaf with $c_1(\mathcal{F}_0)=c_1(\mathcal{F}_s)=-c_1(L)$. We will prove that $ \mathcal{F}_0^{\vee\vee}\cong M_{L}$.

By the semicontinuity, for $s\in S^\circ$, we have \[\dim \mathrm{Hom}(\mathcal{F}_0,\mathcal{O}_X)\geq \mathrm{h}^0(\mathcal{X}_s,\mathcal{F}_s^\vee)= \mathrm{h}^0(\mathcal{X}_s,\mathcal{L}_s)=\mathrm{h}^0(X,L).\]
Here for the first equality, we use the short exact sequence
\[
0\to \mathcal{L}_s^\vee\to \mathrm{H}^0(\mathcal{X}_s,\mathcal{L}_s)\otimes \mathcal{O}_{\mathcal{X}_s}\to \mathcal{F}_s^\vee\to 0
\]
and $\mathrm{h}^0(\mathcal{X}_s,\mathcal{L}_s^\vee)=\mathrm{h}^1(\mathcal{X}_s,\mathcal{L}_s^\vee)=0$ by the Kodaira vanishing.

Recall that $\textrm{rk}(\mathcal{F}_0)=\mathrm{H}^0(X,L)-1$. Then we can consider a map $u: \mathcal{F}_0\to \mathrm{H}^0(X,L )\otimes \mathcal{O}_X$ which is generically of maximal rank, and hence injective as $\mathcal{F}_0$ is torsion-free. Consider the commutative diagram
\[\xymatrix{
0\ar[r]&\mathcal{F}_0\ar[r]^-{u}\ar@{^{(}->}[d]^{f} &\mathrm{H}^0(X,L )\otimes \mathcal{O}_X\ar[r]\ar@{=}[d]&Q\ar[r]\ar[d]^g&0\\
0\ar[r]&\mathcal{F}_0^{\vee\vee}\ar[r]
&\mathrm{H}^0(X,L )\otimes \mathcal{O}_X\ar[r]^-v &Q^{\vee\vee}.&
}
\]
Consider short exact sequences
\begin{align*}
 0\to{}& \mathrm{Ker} (g)\to Q\to \mathrm{Im}(g)\to 0;\\
 0\to{}&\mathrm{Im}(g)^\vee \to Q^\vee\to \mathrm{Ker} (g)^\vee.
\end{align*}
Note that $\mathrm{Ker}(g)\cong \mathrm{Coker}(f)$ has codimension $\geq 2$, so $c_1(\mathrm{Ker}(g))=0$ and $\mathrm{Ker}(g)^\vee=0$. In particular, $\mathrm{Im}(g)^\vee \cong Q^\vee$.
As $\mathrm{Im}(g)$ is torsion-free, by the above exact sequences, we have
\begin{align*}
c_1(Q^{\vee\vee})=c_1(\mathrm{Im}(g)^{\vee\vee})=c_1(\mathrm{Im}(g))=c_1(Q)=-c_1(\mathcal{F}_0)=c_1(L).
\end{align*}
Since $\mathrm{H}^1(X,\mathcal{O}_X)=0$, $c_1$ is injective on the Picard group. Thus $Q^{\vee\vee}= L.$

We claim that $v$ is surjective. Suppose, to the contrary, that it is not surjective. Then
$\mathrm{Im}(v)= I_Z\otimes L $ for some proper closed subscheme $Z$ of $X$.
So we have the following commutative diagram
\[\xymatrix{
0\ar[r]&\mathcal{F}_0^{\vee\vee}\ar[r]
&\mathrm{H}^0(X,L )\otimes \mathcal{O}_X\ar[r]^-{v}\ar[d]^{p} &I_Z\otimes L \ar[r]\ar@{^{(}->}[d]&0\\
&
&\mathrm{H}^0(X,L )\otimes \mathcal{O}_X\ar[r]^-{\textrm{ev}} & L \ar[r]&0.
}\]
Then $\mathcal{F}_0^{\vee\vee}$ has a non-zero subsheaf isomorphic to $\mathrm{Ker}(p)$ which is of the form $W\otimes \mathcal{O}_X$.
But this contradicts the fact that $\mathcal{F}_0^{\vee\vee}$ is $\mu_L$-semistable with $\mu_L(\mathcal{F}_0^{\vee\vee})=\mu_L(\mathcal{F}_0)<0$.

Therefore, $v$ coincides with the evaluation map and $M_{L}=\mathrm{Ker} (\textrm{ev} )\cong \mathcal{F}_0^{\vee\vee}$ is $\mu_L$-semistable. Finally, by \cite[Theorem~4.2.8]{rekuski2022wall}, $M_L$ is $\mu_L$-stable.
\end{proof}
\begin{rmk}
By \cite[Theorem~4.2.8]{rekuski2022wall}, the $\mu_L$-semistability in \cite[Theorem 1.1]{caucci2021stability} can also be strengthened to $\mu_L$-stability. The usage of \cite[Theorem~4.2.8]{rekuski2022wall} was pointed to us by Federico Caucci.
\end{rmk}

\section*{Acknowledgments}
This work was supported by National Key Research and Development Program of China \#2023YFA1010600, \#2020YFA0713200, and NSFC for Innovative Research Groups \#12121001. The authors are members of the Key Laboratory of Mathematics for Nonlinear Sciences, Fudan University. 
The authors are grateful to Yalong Cao, Federico Caucci, Rong Du, Hanfei Guo, Zhiyuan Li, Nick Rekuski,  Yang Zhou, and the referee for their valuable discussions and suggestions.

 \bibliographystyle{abbrv}

\bibliography{ref}

\begin{thebibliography}{10}

\bibitem{basu2023}
S.~Basu and S.~Pal.
\newblock Stability of syzygy bundles corresponding to stable vector bundles on
  algebraic surfaces.
\newblock {\em Bull. Sci. Math.}, 189:Paper No. 103358, 9, 2023.

\bibitem{beauville1983varietes}
A.~Beauville.
\newblock Vari\'et\'es {K}\"ahleriennes dont la premi\`ere classe de {C}hern
  est nulle.
\newblock {\em J. Differential Geom.}, 18(4):755--782, 1983.

\bibitem{bogomolov}
F.~A. Bogomolov.
\newblock The decomposition of {K}\"ahler manifolds with a trivial canonical
  class.
\newblock {\em Mat. Sb. (N.S.)}, 93(135):573--575, 630, 1974.

\bibitem{brenner2008}
H.~Brenner.
\newblock Looking out for stable syzygy bundles.
\newblock {\em Adv. Math.}, 219(2):401--427, 2008.
\newblock With an appendix by Georg Hein.

\bibitem{camere2012stability}
C.~Camere.
\newblock About the stability of the tangent bundle of {$\mathbb{P}^n$}
  restricted to a surface.
\newblock {\em Math. Z.}, 271(1-2):499--507, 2012.

\bibitem{caucci2021stability}
F.~Caucci and M.~Lahoz.
\newblock Stability of syzygy bundles on abelian varieties.
\newblock {\em Bull. Lond. Math. Soc.}, 53(4):1030--1036, 2021.

\bibitem{coandua2011stability}
I.~Coand{\u a}.
\newblock On the stability of syzygy bundles.
\newblock {\em Internat. J. Math.}, 22(4):515--534, 2011.

\bibitem{2409.04666}
L.~Devey, M.~Hering, K.~Jochemko, and H.~Süß.
\newblock On the instability of syzygy bundles on toric surfaces, 2024.
\newblock arXiv:2409.04666.

\bibitem{ein1992stability}
L.~Ein and R.~Lazarsfeld.
\newblock Stability and restrictions of {P}icard bundles, with an application
  to the normal bundles of elliptic curves.
\newblock In {\em Complex projective geometry ({T}rieste, 1989/{B}ergen,
  1989)}, volume 179 of {\em London Math. Soc. Lecture Note Ser.}, pages
  149--156. Cambridge Univ. Press, Cambridge, 1992.

\bibitem{ein2013stability}
L.~Ein, R.~Lazarsfeld, and Y.~Mustopa.
\newblock Stability of syzygy bundles on an algebraic surface.
\newblock {\em Math. Res. Lett.}, 20(1):73--80, 2013.

\bibitem{flenner1984restrictions}
H.~Flenner.
\newblock Restrictions of semistable bundles on projective varieties.
\newblock {\em Comment. Math. Helv.}, 59(4):635--650, 1984.

\bibitem{green1984koszul}
M.~L. Green.
\newblock Koszul cohomology and the geometry of projective varieties.
\newblock {\em J. Differential Geom.}, 19(1):125--171, 1984.

\bibitem{Gross2011}
B.~H. Gross and N.~R. Wallach.
\newblock On the {H}ilbert polynomials and {H}ilbert series of homogeneous
  projective varieties.
\newblock In {\em Arithmetic geometry and automorphic forms}, volume~19 of {\em
  Adv. Lect. Math. (ALM)}, pages 253--263. Int. Press, Somerville, MA, 2011.

\bibitem{GTM52}
R.~Hartshorne.
\newblock {\em Algebraic geometry}, volume No. 52 of {\em Graduate Texts in
  Mathematics}.
\newblock Springer-Verlag, New York-Heidelberg, 1977.

\bibitem{Horingfano5}
A.~H{\"o}ring and R.~{\'S}miech.
\newblock Anticanonical system of {F}ano fivefolds.
\newblock {\em Math. Nachr.}, 293(1):115--119, 2020.

\bibitem{huybrechts1999compact}
D.~Huybrechts.
\newblock Compact hyper-{K}\"ahler manifolds: basic results.
\newblock {\em Invent. Math.}, 135(1):63--113, 1999.

\bibitem{huybrechts2010geometry}
D.~Huybrechts and M.~Lehn.
\newblock {\em The geometry of moduli spaces of sheaves}.
\newblock Cambridge Mathematical Library. Cambridge University Press,
  Cambridge, second edition, 2010.

\bibitem{jiang2023positivity}
C.~Jiang.
\newblock Positivity of {R}iemann-{R}och polynomials and {T}odd classes of
  hyperk\"ahler manifolds.
\newblock {\em J. Algebraic Geom.}, 32(2):239--269, 2023.

\bibitem{kollar1995shafarevich}
J.~Koll{\'a}r.
\newblock {\em Shafarevich maps and automorphic forms}.
\newblock M. B. Porter Lectures. Princeton University Press, Princeton, NJ,
  1995.

\bibitem{positivity1}
R.~Lazarsfeld.
\newblock {\em Positivity in algebraic geometry. {I}}, volume~48 of {\em
  Ergebnisse der Mathematik und ihrer Grenzgebiete. 3. Folge. A Series of
  Modern Surveys in Mathematics [Results in Mathematics and Related Areas. 3rd
  Series. A Series of Modern Surveys in Mathematics]}.
\newblock Springer-Verlag, Berlin, 2004.
\newblock Classical setting: line bundles and linear series.

\bibitem{Macias2011}
P.~Macias~Marques and R.~M. Mir{\'o}-Roig.
\newblock Stability of syzygy bundles.
\newblock {\em Proc. Amer. Math. Soc.}, 139(9):3155--3170, 2011.

\bibitem{Miro2023}
R.~M. Mir{\'o}-Roig and M.~Salat-Molt{\'o}.
\newblock Syzygy bundles of non-complete linear systems: stability and
  rigidness.
\newblock {\em Mediterr. J. Math.}, 20(5):Paper No. 265, 21, 2023.

\bibitem{miro2024}
R.~M. Mir{\'o}-Roig and M.~Salat-Molt{\'o}.
\newblock Ein-{L}azarsfeld-{M}ustopa conjecture for the blow-up of a projective
  space.
\newblock {\em Ann. Mat. Pura Appl. (4)}, 203(1):221--233, 2024.

\bibitem{2405.17006}
S.~Misra and N.~Ray.
\newblock On stability of syzygy bundles, 2024.
\newblock arXiv:2405.17006.

\bibitem{Mukherjee2022}
J.~Mukherjee and D.~Raychaudhury.
\newblock A note on stability of syzygy bundles on {E}nriques and bielliptic
  surfaces.
\newblock {\em Proc. Amer. Math. Soc.}, 150(9):3715--3724, 2022.

\bibitem{Ou23}
W.~Ou.
\newblock On generic nefness of tangent sheaves.
\newblock {\em Math. Z.}, 304(4):Paper No. 58, 23, 2023.

\bibitem{rekuski2022wall}
N.~Rekuski.
\newblock {\em Wall-crossing for tilt stability}.
\newblock PhD thesis, Michigan State University, 2022.

\bibitem{Rekuski23}
N.~Rekuski.
\newblock Stability of kernel sheaves on del pezzo surfaces, 2023.
\newblock arXiv:2305.16306.

\bibitem{rekuski2024stability}
N.~Rekuski.
\newblock Stability of kernel sheaves associated to rank one torsion-free
  sheaves.
\newblock {\em Math. Z.}, 307(1):Paper No. 2, 18, 2024.

\bibitem{Torres2022}
H.~Torres-L{\'o}pez and A.~G. Zamora.
\newblock {$H$}-stability of syzygy bundles on some regular algebraic surfaces.
\newblock {\em Beitr. Algebra Geom.}, 63(3):589--598, 2022.

\bibitem{trivedi2010semistability}
V.~Trivedi.
\newblock Semistability of syzygy bundles on projective spaces in positive
  characteristics.
\newblock {\em Internat. J. Math.}, 21(11):1475--1504, 2010.

\end{thebibliography}

\end{document}